\providecommand{\Im}{\loglike{lm}}
\newtheorem{mydef}{Definition}
\newtheorem{Problem}{Problem}
\title[Constructive Method for Scalar
  Wiener-Hopf Equations]{A Constructive Method for Approximate Solution to Scalar
  Wiener-Hopf Equations}
\author{Anastasia V. Kisil}    
\address{Faculty of Mathematics, University of Cambridge,
  Wilberforce Road, Cambridge, CB3 0WA, UK }
\email{a.kisil@maths.cam.ac.uk}
\begin{document}

\begin{abstract}
This paper presents a novel method of approximating the scalar Wiener-Hopf
equation; and therefore constructing an approximate solution. The
advantages of this method over the existing methods are
reliability and explicit error bounds.  Additionally the degrees of the
polynomials in the rational approximation are considerably smaller than in other
approaches. 

The need for a numerical solution is motivated by difficulties in
computation of the exact solution. The approximation developed in this
paper is with a view of generalisation to matrix Wiener-Hopf for which
the exact solution, in general, is not known.

The first part of the paper develops error bounds in \(L_p\) for \(1<p<
\infty \). These indicate how accurately the solution is approximated
in  terms of how accurate the equation is approximated. 
 
The second part of the paper describes the approach of approximately solving the Wiener-Hopf
equation that employs the  Rational Carath\'{e}odory-Fej\'{e}r Approximation. The
method is adapted by constructing a mapping of the real line to the unit
interval. Numerical examples to demonstrate the use of the
proposed technique are included (performed on Chebfun), yielding error
as small as \(10^{-12}\) on the whole real line.  
\end{abstract} 

\keywords{Wiener-Hopf, Riemann-Hilbert, rational approximation,
  Rational Carath\'{e}odory-Fej\'{e}r Approximation, Chebfun, constructive}

\maketitle

\section{Introduction}

 The Wiener-Hopf method is used for  
 a broad collection of PDEs which arise in acoustic, finance,
 hydrodynamic, elasticity, potential and electromagnetic theories
 \cite{Mishuris09, Ab_ex}. It is an elegant 
 method based on the exploitation of the analyticity properties of the
 functions. For the scalar Wiener-Hopf the solution can
 be expressed in terms of a Cauchy type integral  \cite{bookWH}*{Ch. 1.3}.

 In more complicated scalar Wiener-Hopf equations the exact solution
 is difficult or slow to compute, see e.g.  \cite{diffucultie_1,
   diffucultie_2, bounds}.  Approximate solutions were considered
 early on but they were mainly constructed using \emph{ ad hoc}
 observations \cite{bookWH}*{Ch. 4.5}. In 2000 a systematic way of
 approximating the Wiener-Hopf equations was published by I. D.
 Abrahams \cite{Pade}. Since then it proved popular and found
 applications in different branches of mathematics including finance
 \cite{finance}.

 The method proposed in \cite{Pade} is based on uniform approximations
 of the kernel on the whole strip by a two point Pad\'{e}
 approximation (the two points being 0 and \(\infty\) on the real
 axis). However, there are two issues that make the application of
 this method difficult. The first is that it is unclear when the two
 point Pad\'{e} approximation has small error on the whole strip
 \cite{Pade_book}. Secondly, even if the maximum error of
 approximating the kernel on the whole strip is known, there are no
 error bounds presented to calculate the resulting error in the
 solution.

Another motivation for the development of approximate methods is the
matrix Wiener-Hopf problem. The determination of a good numerical solution is
important for the matrix Wiener-Hopf problem, since as yet there is no
constructive way of solving it in general. 

This paper aims to present a consistent method of
approximately solving the scalar  Wiener-Hopf. The following questions
are addressed:

\begin{Problem}
Given a scalar Wiener-Hopf problem find an approximate solution which can be
demonstrated to be within a given accuracy from the exact solution. 
\end{Problem}

\begin{Problem}
How to perform Problem \(1\) computationally in a way that is reliable, optimal
and easy.
\end{Problem}

Problem~\(1\) will be addressed in Section~\(3\) with new
estimations for the error in the Wiener-Hopf factors. This is done
though expressing the factors in terms of the Hilbert
transform, which is connected to the Cauchy singular integral. Note
that although the method proposed in this paper involves the
construction of an approximate solution to the given Wiener-Hopf
equation, the solution is the exact solution of a perturbed
Wiener-Hopf equation.

The main difficulty in Problem~\(2\) is that the Wiener-Hopf equation
is set on an unbounded interval. We propose the novel method based on
an appropriate mapping to the unit interval, see Section~\(4\). This
allows the Carath\'{e}odory-Fej\'{e}r rational approximation to then
be used in Section~\(5\). Lastly, numerical examples are given.

\section{Preliminaries}
\label{sec:Preliminaries}

The following conventions will be used throughout the paper. \emph{A strip} around the real axis (given \(\tau_{-}<0 <\tau_{+}\))  is \{\(\alpha=\sigma+i\tau\) :
\(\tau_{-}<\tau<\tau_{+}\)\}\(\subset \mathbb{C}\). 
 The subscript \(+\) (or \(-\)) indicates  that the
function is \emph{analytic  in the half-plane} \(\tau>\tau_{-}\) (or \(\tau<\tau_{+}\)). 
 Functions in the Wiener-Hopf equation without the subscript are
  analytic in the strip. 

The Wiener-Hopf problems are recalled below.

The  \emph{multiplicative Wiener-Hopf} problem is: given a function \(K\)
(analytic, zero-free
 and \(K(\alpha) \to 1\) as \( |\sigma| \to \infty\) in the strip) to find functions \(K_+\) and \(K_-\) which satisfy the
following equation in the strip:
\begin{equation}
  \label{eq:mul}
K(\alpha)= K_-(\alpha)K_+(\alpha).
\end{equation}
In addition \(K_-\) and \(K_+\) are required to be:
\begin{itemize}
\item analytic and  non-zero in the respective half-plane. 
\item of \emph{subexponential growth} the respective
  half-planes:
\[|\log K_\pm(\alpha)|=O(|\alpha|^p), \quad p<1,  \quad \text{as} \quad
|\sigma| \to \infty.\]
\end{itemize}
The function \(K(\alpha)\)  in the multiplicative Wiener-Hopf
factorisation will be referred to as  \emph{the kernel}. The functions \(K_+\) and \(K_-\)
will be called \emph{factors} of \(K\). Those factor are unique up to
a constant. In other words if there are two such factorisations
\(K=K_+K_-\) and \(K=P_+P_-\) then:
\[K_+=cP_+ \quad \text{and} \quad K_-=c^{-1}P_-,\]
where \(c\) is an arbitrary complex number \cite{Kranzer_68}.  In this paper they will be
normalised so that \(K_\pm(\alpha) \to 1\) as \( |\sigma| \to \infty\) in the strip.

\begin{rem}
  The above normalisation at infinity allows to control the regularity
  of the Fourier transform of an approximation. This is useful if
  a factorisation of the Fourier transform is used, say, for a solution
  of differential equations.
\end{rem}

The multiplicative Wiener-Hopf factorisation can be reduced to the
\emph{additive Wiener-Hopf} problem via application of logarithm. Under the conditions on the
kernel \(K\), let \(f=\log K\) then additive Wiener-Hopf problem is:
given a function  \(f\) to find functions \(f_+\) and \(f_-\) which
satisfy the  following equation in the strip:
\begin{equation}
\label{eq:add}
f(\alpha)= f_-(\alpha)+f_+(\alpha).
\end{equation}
Additive and multiplicative problems
are equivalent only in the scalar case. In the matrix Wiener-Hopf
\(K_+\) and \(K_-\) will in general not be  commutative so no such
equivalence is possible.

The existence of solutions to the additive and multiplicative
scalar Wiener-Hopf problem is addressed in for example \cite{bookWH}*{Ch. 1.3}.

These two Wiener-Hopf splittings are  key to solving the \emph{general
  Wiener-Hopf problem}. That is:  given functions \(A\) and \(C\) find functions \(\Phi_+\) and \(\Psi_-\), which satisfy the 
following equation in the strip:
\begin{equation}
  \label{eq:gen}
A( \alpha)\Phi_+(\alpha)+ \Psi_-(\alpha)+C(\alpha)=0.
\end{equation}
For more details about the Wiener-Hopf problem see  ~\cite{bookWH}.

\subsection{The Riemann-Hilbert Problem}

The Wiener-Hopf problem is a special case of the Riemann-Hilbert
problem. Roughly, the Riemann-Hilbert
problem has a more general contour instead of the strip and
the conditions on the function \(A(\alpha)\) are weakened. In
particular it is required that   \(A(\alpha)\) is  only  H\"{o}lder
continuous on the contour. 

Let   \( G(t), g(t)\) be H\"{o}lder continuous functions on
a simple contour \(\Sigma\). The \emph{Riemann-Hilbert problem} is to find an  analytic  function
\(E(\alpha)\) which has
 values \(E_-(t), E_+(t)\) on the contour as the limit is taken
from different sides of the contour and which satisfies the equation:
\[G( t)E_-(t)+ E_+(t)+g(t)=0, \qquad t \in \Sigma.\] 
The Wiener-Hopf equation \eqref{eq:gen} can be considered as the special case of the 
Riemann-Hilbert problem  so a good 
approximation of the values of the function on the real 
line is sufficient. This is simpler  than trying to construct an 
approximation that agrees well on the whole strip of analyticity.

\section{Estimates on the Approximate Wiener-Hopf Factorisation}
\label{sec:bound}

The approximate solution to \eqref{eq:mul} can be found by 
 approximating the kernel in the
Wiener-Hopf equation. This section quantifies the difference between
the modified and the original Wiener-Hopf equation, to address
Problem~\(1\) introduced earlier. More precisely, let  \(|K(\alpha)-\tilde{K}(\alpha)|_{p} \le \epsilon_p\), 
the aim is to  bound \(|K_{\pm}(\alpha)-\tilde{K}_{\pm}(\alpha)|_{p}\) in
terms of  \(\epsilon_p\) and computable quantities of \(K(\alpha)\).

The first step is to link  the Wiener-Hopf
factorisation to the Hilbert Transform \(H(f)(y)\) of \(f\) with
\(y\in \mathbb{R}\). If:
\[f(y)= f_+(y)+  f_-(y),\]
then \cite{Pandey}*{Ch.~2}:
\[i H(f)(y)= f_+(y)-  f_-(y),\]
and hence:
\[f_\pm(y)=\frac{1}{2} f(y)\pm\frac{i}{2} H(f)(y).\]
The advantage of expressing \(f_\pm\) in terms of the Hilbert transform is
the following theorem.
\begin{thm}[Titchmarsh-Riesz] \cite{Pandey}*{p. 92} 
\label{thm:hilbert}
Let \(f \in L_p(\mathbb{R}) \) for some \(1<p
  <\infty\), then:
\[||H(f)||_p \le c(p) ||f||_p,\]
where the best constant is:
\begin{equation}
\label{eq:hilbert}
c(p) = \left\{
 \begin{array}{rl}
  \tan( \pi/(2p)) & \text{if } \quad 1<p \le 2,\\
   \cot(\pi/(2p)) & \text{if } \quad 2 <p < \infty.\\ 
 \end{array} \right.
\end{equation}
\end{thm}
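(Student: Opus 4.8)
There are two routes, and only the second yields the sharp constants, so I would use it, though the first is worth recording. The first gives boundedness only: for $p=2$ Plancherel gives $\|H(f)\|_2=\|f\|_2$ (so $c(2)=1=\tan(\pi/4)$); then, writing $F=u+iv$ for the analytic completion in the upper half-plane of a real $f$ with $v=H(f)\to 0$ at infinity, the identity $\operatorname{Im}(F^2)=2uv$ shows $2uv=H(u^2-v^2)$ and hence, via H\"older and $t:=\|v\|_p/\|u\|_p$, the quadratic bound $t^2\le 1+2\,c(p/2)\,t$, i.e. $c(p)\le c(p/2)+\sqrt{c(p/2)^2+1}$; iterating from $c(2)=1$ yields $c(2^k)\le\cot\!\big(\pi/2^{k+1}\big)$, Riesz--Thorin interpolation fills in the intermediate $p>2$, and duality (below) covers $p<2$ --- but interpolation does not deliver the exact values in \eqref{eq:hilbert}. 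So I would instead use the complex--analytic subharmonicity argument of Pichorides.

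First reduce to real $f$: writing $f=f_1+if_2$ with $f_j$ real, the rotation identity $\int_0^{2\pi}|a\cos\theta+b\sin\theta|^p\,d\theta=\gamma_p\,(a^2+b^2)^{p/2}$ and the $\mathbb{R}$-linearity of $H$ give the complex inequality from the real one with the same constant; a further reduction brings one to $f\ge 0$. For such $f$ in $L_p(\mathbb{R})$ let $u$ be its Poisson extension, $v$ the conjugate with $v\to 0$, and $F=u+iv$; since $u\ge 0$ throughout the half-plane, $F$ avoids $(-\infty,0]$, so $F^{\,p}$ (principal branch) is analytic and lies in $H_1$. The crux is the pointwise inequality, with $a_p=\big(\tan\tfrac{\pi}{2p}\big)^p$,
\[|\operatorname{Im} w|^{\,p}\;\le\;a_p\,(\operatorname{Re} w)^{\,p}\;-\;b_p\,\operatorname{Re}\!\big(w^{\,p}\big)\qquad(\operatorname{Re} w\ge 0),\qquad b_p\ge 0,\]
equivalently $|\sin\phi|^p\le a_p\cos^p\phi-b_p\cos(p\phi)$ for $|\phi|\le\pi/2$. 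Applying it along the real axis, integrating, and using $\int_{\mathbb{R}}\operatorname{Re}(F^{\,p})\,dx=0$ (valid since $F^{\,p}\in H_1$) eliminates the last term and gives $\|\operatorname{Im} F\|_p^p\le a_p\|\operatorname{Re} F\|_p^p$, which is the theorem for $1<p\le 2$.

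I expect the main obstacle to be the verification of this pointwise inequality with \emph{exactly} the constant $\big(\tan\tfrac{\pi}{2p}\big)^p$ and no smaller --- this is the step where the value $\tan(\pi/2p)$ and the sharpness asserted in \eqref{eq:hilbert} are actually produced --- together with making the argument robust enough to cover sign-changing $f$, which requires a comparison function that is subharmonic across its branch locus. The remaining range $2<p<\infty$ then follows by duality: $H$ is, up to sign, its own adjoint, so $\|H\|_{L_p\to L_p}=\|H\|_{L_{p'}\to L_{p'}}$ with $1/p+1/p'=1$, and since $p'<2$ the case already proved gives the constant $\tan(\pi/2p')=\tan\!\big(\pi/2-\pi/2p\big)=\cot(\pi/2p)$, the second line of \eqref{eq:hilbert}. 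A standard density argument from Schwartz functions to all of $L_p(\mathbb{R})$ completes the proof.
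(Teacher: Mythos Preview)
The paper does not prove this theorem at all: it is stated with a citation to \cite{Pandey}*{p.~92} and used as a black box, so there is no ``paper's own proof'' to compare your proposal against. Your outline is essentially the Pichorides argument, which is indeed the standard route to the sharp constants in \eqref{eq:hilbert}; the duality step recovering $\cot(\pi/2p)$ for $p>2$ is correct, as is the reduction to real $f$ by rotation averaging.

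That said, as you yourself flag, the proposal is incomplete at exactly the point that carries all the content: you state the pointwise inequality $|\sin\phi|^p\le a_p\cos^p\phi-b_p\cos(p\phi)$ with $a_p=\tan^p(\pi/2p)$ but do not verify it, and you also defer the extension from $f\ge 0$ to sign-changing $f$ (where $F$ can hit the branch cut of $w\mapsto w^p$ and one must replace the naive analytic power by a carefully chosen subharmonic comparison function). Until those two steps are carried out, what you have is a correct roadmap rather than a proof. Since the paper only quotes the result, this is already more than it provides, but if you intend to include a self-contained proof you will need to fill in the Pichorides inequality (an elementary but nontrivial one-variable calculus argument) and either Ess\'en's subharmonic version or the standard decomposition into positive and negative parts with a separate treatment of the cross terms.
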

Based on these classical estimations we obtain the following bounds on
the additive Wiener-Hopf factorisation.
\begin{lem} [Additive Bounds in \(L_p\) for \(1<p
  <\infty\)]
\label{lem:addbound}
Let \(f(y)= f_+(y)+  f_-(y)\) and \(\tilde{f}(y)=
\tilde{f}_+(y)+  \tilde{f}_-(y)\) with
\(||f(y)-\tilde{f}(y) ||_p < \epsilon_p\) then:

\[||f_{\pm}(y)-\tilde{f}_{\pm}(y) ||_p \le \frac{1}{2}(1+c(p))
\epsilon_p,\]

where \(c(p)\) is defined as in \eqref{eq:hilbert}.
\end{lem}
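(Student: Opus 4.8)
The plan is to reduce the statement to the two ingredients already assembled just before it: the representation $f_{\pm}(y)=\tfrac12 f(y)\pm\tfrac{i}{2}H(f)(y)$ for the additive factors in terms of the Hilbert transform, and the Titchmarsh--Riesz bound of Theorem~\ref{thm:hilbert}. First I would set $g:=f-\tilde f$, so that $g\in L_p(\mathbb{R})$ with $\|g\|_p<\epsilon_p$ by hypothesis, and observe that subtracting the two additive splittings gives
\[
g(y)=\bigl(f_+(y)-\tilde f_+(y)\bigr)+\bigl(f_-(y)-\tilde f_-(y)\bigr),
\]
a decomposition of $g$ into a ``$+$'' part and a ``$-$'' part. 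Provided the factors are taken with the normalisation at infinity fixed in Section~\ref{sec:Preliminaries} — the one for which the Hilbert-transform representation holds — this is the canonical additive splitting of $g$, so that by linearity of $H$ one has $f_{\pm}-\tilde f_{\pm}=\tfrac12 g\pm\tfrac{i}{2}H(g)$.

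Next I would apply the triangle inequality in $L_p$ to this identity, obtaining
\[
\|f_{\pm}-\tilde f_{\pm}\|_p\le \tfrac12\|g\|_p+\tfrac12\|H(g)\|_p,
\]
where the factor $\tfrac12$ in front of the Hilbert term uses that $\pm i$ has modulus $1$, so the ``$+$'' and ``$-$'' cases are handled simultaneously. Since $g\in L_p(\mathbb{R})$ with $1<p<\infty$, Theorem~\ref{thm:hilbert} gives $\|H(g)\|_p\le c(p)\|g\|_p$ with $c(p)$ as in \eqref{eq:hilbert}. Substituting this and then using $\|g\|_p<\epsilon_p$ yields
\[
\|f_{\pm}-\tilde f_{\pm}\|_p\le \tfrac12\bigl(1+c(p)\bigr)\|g\|_p\le \tfrac12\bigl(1+c(p)\bigr)\epsilon_p,
\]
which is exactly the asserted bound.

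The triangle inequality and the invocation of Theorem~\ref{thm:hilbert} are routine; the only point I expect to require care, and which I would state explicitly, is the legitimacy of applying the Hilbert-transform representation to the difference $g$. This needs $f_{\pm}$ and $\tilde f_{\pm}$ to be normalised consistently, so that $f_{\pm}-\tilde f_{\pm}$ genuinely is the splitting produced by $\tfrac12 g\pm\tfrac{i}{2}H(g)$ and does not differ from it by an additive constant; equivalently, one must know that $g$ and $H(g)$ both lie in $L_p$, which is automatic here since the right-hand side of the representation is in $L_p$ by Titchmarsh--Riesz. Once this consistency is noted, no further estimates are needed.
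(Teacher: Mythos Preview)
Your proof is correct and follows essentially the same route as the paper: write the factors via the Hilbert-transform representation, subtract, apply the triangle (Minkowski) inequality, and invoke Theorem~\ref{thm:hilbert} together with the linearity of \(H\). The only cosmetic difference is that you introduce \(g=f-\tilde f\) first and then apply the representation, whereas the paper applies the representation to \(f_{+}\) and \(\tilde f_{+}\) separately before combining; your added remark on the normalisation needed for the representation to hold is a worthwhile clarification.
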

\begin{proof}
 Expressing in terms of  the Hilbert transform and using
 Minkowski's inequality:
\begin{eqnarray*}
||f_{+}(y)-\tilde{f}_{+}(y) ||_p &=&||\frac{1}{2}
f(y)+\frac{i}{2} H(f)(y) -\frac{1}{2}
\tilde{f}(y)-\frac{i}{2} H(\tilde{f})(y)||_p \\
&\le&||\frac{1}{2}f(y)-\frac{1}{2}\tilde{f}(y)||_p+||\frac{i}{2}
H(f)(y) -\frac{i}{2} H(\tilde{f})(y)||_p\\
&\le& \frac{1}{2}(1+c(p)) \epsilon_p.
\end{eqnarray*}
Here the linearity property of the Hilbert
transform, \(H(f)+H(g)=H(f+g)\) is used.
\end{proof}

The theorem gives bounds on the error on the real line but in fact
since the functions are analytic, the bounds will hold in the
upper/lower half-planes by the maximum modulus principle.

It will be useful to express the Wiener-Hopf factors in terms of the Hilbert transform;
\[K_\pm(y)= \exp \big(\frac{1}{2}\log K(y) \pm\frac{i}{2} H(\log
K)(y)\big)=K^{1/2}(y) \exp\big(\pm \frac{i}{2} H(\log
K)(y)\big).\]
The  main new result of this section is:

\begin{thm}[Multiplicative Bounds in \(L_p\) for \(1<p
  <\infty\)]
\label{thm:mul}
Let  \(K(y)\) and \(\tilde{K}(y)\) be two
kernels and \(m<||K||_p<M\). If
\(||K(y)-\tilde{K}(y)||_p< \epsilon_p\) then:

\[||K_{\pm}(y)-\tilde{K}_{\pm}(y)||_{p} <
\frac{(M+\epsilon_p)^{1/2}\exp (\frac{c(p)\pi}
{2})}{2(m-\epsilon_p)}(1+c(p)) \epsilon_p,\] 

where \(c(p)\) is defined in \eqref{eq:hilbert}.
\end{thm}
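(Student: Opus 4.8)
The plan is to reduce the multiplicative estimate to the additive one of Lemma~\ref{lem:addbound} by passing to logarithms, and then to transfer the bound back through the exponential map. Writing the factors via the Hilbert transform,
\[K_\pm(y)=\exp\Big(\frac12\log K(y)\pm\frac{i}{2}H(\log K)(y)\Big),\qquad \tilde K_\pm(y)=\exp\Big(\frac12\log\tilde K(y)\pm\frac{i}{2}H(\log\tilde K)(y)\Big),\]
set $a(y)=\log K_\pm(y)$ and $b(y)=\log\tilde K_\pm(y)$, so that $K_\pm-\tilde K_\pm=e^{a}-e^{b}$. The target inequality then splits into three pieces matching the three groups of factors in the claimed bound: a pointwise control of $|e^{a}-e^{b}|$ by $|a-b|$ times a prefactor; the constant $\frac12(1+c(p))$ supplied by Lemma~\ref{lem:addbound}; and the term $\epsilon_p/(m-\epsilon_p)$ obtained by comparing $\log K$ with $\log\tilde K$.

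First I would estimate $\|\log K-\log\tilde K\|_p$. Applying the mean value theorem to $\log$ along the segment joining $K(y)$ and $\tilde K(y)$, which stays away from $0$ because $K$ and $\tilde K$ are bounded below, gives $|\log K(y)-\log\tilde K(y)|\le |K(y)-\tilde K(y)|/\min(|K(y)|,|\tilde K(y)|)$; since $|\tilde K|\ge m-\epsilon_p$ this yields $\|\log K-\log\tilde K\|_p\le \epsilon_p/(m-\epsilon_p)$. I would then apply Lemma~\ref{lem:addbound} with $f=\log K$ and $\tilde f=\log\tilde K$, noting $f_\pm=\log K_\pm$ so that $f_\pm-\tilde f_\pm=a-b$, to conclude $\|a-b\|_p\le \frac12(1+c(p))\,\epsilon_p/(m-\epsilon_p)$.

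Next I would pass back through the exponential. From the elementary bound $|e^{a}-e^{b}|\le |a-b|\max\!\big(e^{\mathrm{Re}\,a},e^{\mathrm{Re}\,b}\big)$ and the identity $e^{\mathrm{Re}\,b}=|\tilde K_\pm|=|\tilde K|^{1/2}\,\bigl|e^{\pm\frac{i}{2}H(\log\tilde K)}\bigr|$, it remains to bound $|\tilde K|^{1/2}\le(M+\epsilon_p)^{1/2}$ and the remaining factor $\bigl|e^{\pm\frac{i}{2}H(\log\tilde K)}\bigr|=e^{\mp\frac12 H(\arg\tilde K)}$ by $e^{c(p)\pi/2}$, using Theorem~\ref{thm:hilbert} for $\arg\tilde K$, which is itself controlled because $\tilde K$ is zero-free and normalised to tend to $1$ at infinity. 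The identical estimate holds with $K$ in place of $\tilde K$, so $\max(e^{\mathrm{Re}\,a},e^{\mathrm{Re}\,b})\le(M+\epsilon_p)^{1/2}e^{c(p)\pi/2}$; taking $L_p$ norms and inserting the bound on $\|a-b\|_p$ gives the theorem.

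I expect the delicate point to be this final step: the hypotheses are $L_p$ statements, whereas the prefactor $\max(e^{\mathrm{Re}\,a},e^{\mathrm{Re}\,b})$ needs a genuinely pointwise control of $\mathrm{Re}\,a=\frac12\log|K|\mp\frac12 H(\arg K)$, i.e.\ of the Hilbert transform of the argument, which the $L_p$ bound of Theorem~\ref{thm:hilbert} does not by itself furnish. The natural remedies are to read the bounds $m<|K|<M$ and $|K-\tilde K|<\epsilon_p$ pointwise, or to exploit analyticity together with the normalisation $K_\pm\to1$ to bound the relevant exponentials; once the prefactor is in hand, assembling the three factors is purely routine.
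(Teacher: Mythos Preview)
Your proposal follows the paper's proof essentially step for step: take logarithms, bound \(\|\log K-\log\tilde K\|_p\) by \(\epsilon_p/(m-\epsilon_p)\) via the mean value inequality, apply Lemma~\ref{lem:addbound} to get \(\|\log K_\pm-\log\tilde K_\pm\|_p\le\frac12(1+c(p))\epsilon_p/(m-\epsilon_p)\), estimate \(|\tilde K_\pm|\) through the Hilbert-transform representation as \((M+\epsilon_p)^{1/2}\exp(c(p)\pi/2)\), and finish with a second mean value estimate for the exponential. The paper's chain of inequalities is exactly this.

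The concern you single out in your last paragraph is well taken and is glossed over in the paper as well. The paper bounds \(\|\tilde K_\pm\|_p\) (written as an \(L_p\) norm) by \((M+\epsilon_p)^{1/2}\exp(c(p)\pi/2)\), the last step of which treats the Titchmarsh--Riesz inequality on \(H(\mathrm{Arg}\,\tilde K)\) as though it gave a pointwise bound; and then it uses this quantity as a pointwise prefactor in the final mean value step \(\|K_\pm-\tilde K_\pm\|_p\le\|\tilde K_\pm\|\cdot\|\log K_\pm-\log\tilde K_\pm\|_p\). Likewise, the very first mean value estimate needs a pointwise lower bound \(|\tilde K|\ge m-\epsilon_p\), not an \(L_p\) one. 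So the gap you flag is present in the original argument; the intended reading is that \(m<|K|<M\) and \(|K-\tilde K|<\epsilon_p\) hold pointwise (despite the \(\|\cdot\|_p\) notation), and that the Hilbert transform of the bounded argument is taken to be bounded. You have not missed anything---you have been more careful than the source.
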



\begin{proof}
The first step is to express the product factorisation
\(K(y)=K_+(y)K_-(y)\) as an additive
factorisation. Taking logarithms:
\[\log K(y)= \log K_+(y) + \log K_-(y).\]
From the assumptions on the kernel \(K(y)\),  \(\log K(y)\) will be a continuous and 
single valued function and hence the additive decomposition is well
defined. Given:
\[||K(y)-\tilde{K}(y)||_p< \epsilon_p.\]

Then using the mean value inequality (and
\(||\tilde{K}(y)||_p>m-\epsilon_p\)):

\[||\log(K(y))-\log(\tilde{K}(y))||_p<
\frac{1}{m-\epsilon_p}\epsilon_p.\]

Apply Lemma \ref{lem:addbound}, which gives:
\[||\log(K_{\pm}(y))-\log(\tilde{K}_{\pm}(y))||_p<\frac{1}{2(m-\epsilon_p)}(1+c(p))\epsilon_p.\]

Lastly, note that \(||\tilde{K}_\pm(y)||_p<(M+\epsilon_p)^{1/2}\exp (\frac{c(p)\pi}
{2})\) since:

\begin{eqnarray*}
||\tilde{K}_\pm(y)||_p&=&||\tilde{K}^{1/2}(y) \exp\big(\pm \frac{i}{2} H(\log
\tilde{K})(y)\big)||_p\\
&\le&(M+\epsilon_p)^{1/2} ||\exp\big(\pm \frac{i}{2} H(\log
\tilde{K})(y)\big)||_p\\
&\le&(M+\epsilon_p)^{1/2} ||\exp\big(\pm \frac{1}{2} \Im (H(\log
\tilde{K})(y))\big)||_p\\
&\le &(M+\epsilon_p)^{1/2} ||\exp\big(\pm \frac{1}{2}  (H(\text{Arg}
(\tilde{K}))(y))\big)||_p\\
&\le &(M+\epsilon_p)^{1/2}\exp (\frac{c(p)\pi}
{2}),
\end{eqnarray*}
and so:
\[||K_{\pm}(y)-\tilde{K}_{\pm}(y)||_{p}<
(M+\epsilon_p)^{1/2}\exp (\frac{c(p)\pi}
{2})||\log(K_{\pm}(y))-\log(\tilde{K}_{\pm}(y))||_p,\]

from which the result follows.
\end{proof}

\begin{rem}[Real Kernels]
In the case when  kernels \(K\) and \(\tilde{K}\) are real valued,
the bounds could be simplified, since then
\(||\tilde{K}_\pm(y)||_p<(M+\epsilon_p)^{1/2}\)
and so:

\[||K_{\pm}(y)-\tilde{K}_{\pm}(y)||_{p} <
\frac{(M+\epsilon_p)^{1/2}}{2(m-\epsilon_p)}(1+c(p)) \epsilon_p.\] 

\end{rem}

\begin{rem}[\(L_\infty\) norm]

The above theorem does not include the case of \(L_\infty\) norm, in
fact no such result is possible for the \(L_\infty\) norm as is
demonstrated by the counter example below.

Consider \( K_1(y)=1\) defined on the real line and

\[  K_2 (y) = \left\{ 
 \begin{array}{lll}
         1+ \epsilon & \mbox{if $0 \le y \le \text{arccot} (n_1)$};\\
        1+ \epsilon - \frac{( y -n_1) \epsilon}{n_2-n_1}& \mbox{if $ \text{arccot}(n_1) \le y \le  \text{arccot}(n_2)$};\\
         1 & \mbox{if $ \text{arccot}(n_2) \le y \le \infty$,}
        \end{array}
 \right.\] 
 and
 \[K_2 (y)=K_2 (-y).\]
 Then:
 \[||K_2 (y)-K_1 (y)||_{\infty}= \epsilon,\]
but it has been shown by making \(n_2-n_1\) small the factors can
differ by an arbitrary large amount in \(L_\infty\) norm, see \cite{cont2}.
\end{rem}

Below, the proposed method of the solution of Problem \(1\) is presented:

\begin{itemize}

\item Approximate, with arbitrary accuracy, the
Wiener-Hopf kernel \(K\) by rational 
functions  \(\tilde{K}\). 

\item Perform the Wiener-Hopf factorisation of the rational kernel
  \(\tilde{K}\) by inspection. The  \(\tilde{K}_+\) will have the form:

\[\frac{(y-z_1) \dots (y-z_n)}{(y-p_1) \dots
  (y-p_m)},\]
where the \(z_i\) (\(p_j\)) are all the zeroes (poles) of  \(\tilde{K}\) which lie in the lower
half-plane. The other factor \(\tilde{K}_-\) will have the remainder
 poles and zeroes in the upper half-plane. 

\item Using Theorem \ref{thm:mul} the error
  \(||K_{\pm}(y)-\tilde{K}_{\pm}(y)||_{p}\) can be calculated.

\end{itemize}

The rest of the paper will concentrate on Problem~2 i.e. the practical
aspect of Problem~1.

 \section{Mapping of the Real Line}
 \label{sec:map}

 
In the previous section it was proved that the approximation of the kernel results in a computable error in the
Wiener-Hopf factors. Thus, to simplify the problem one may wish to
approximate a kernel by another which is easier to factorise e.g. by
rational functions. 
To construct such an approximation the first step is to employ a mapping of the real line to the unit circle or the interval \([-1,1]\). 
Those mappings transform the problem of approximation on
the whole real line to a simpler problem on the unit circle or the
interval. Methods for approximating on the unit circle and the interval will be 
discussed in the next section.

To easily distinguish between functions defined  on the real axis, the unit circle, and the interval
\([-1,1]\) the following notation will be fixed:

\begin{itemize}
\item Functions on the real line will be  denoted by capital letter and
  variable \(y\), for example \(F(y)\); 
\item Functions on the unit circle will be denoted by small case letter and
  variable \(z\), for example \(f(z)\); 
\item Functions on the interval \([-1,1]\) will be denoted by bold capital letters
  and variable \(x\), for example \(\bold{F}(x)\);
  \item The rational approximation for  functions will be denoted by letters with a tilde,
   for example \(\tilde{F}(y).\)
\end{itemize}

It is typical in applications for the kernel to be an
even function  (i.e. \(F(y)=F(-y)\)) and the mapping is simplified in this case.
Therefore it is instructive to consider the mapping for even functions 
before considering the general case.  

\subsection{Even Functions}

Define a  conformal  M\"obious map  \(M(y)\) that maps the real line
to the unit circle as follows \cite{beardon2005algebra}*{Ch 13}:
\[ 1 \mapsto  i, \qquad -1 \mapsto -i,  \qquad \infty \mapsto 1.\]
 Explicitly it is
given by:
\[M(y)=\frac{-1+iy}{1+iy}, \qquad M^{-1}(z)=\frac{i(1+z)}{-1+z}.\]
The next map  is a projection of the unit circle to the
interval \([-1,1]\) by  the Joukowski map (a
conformal map which, incidentally, comes from
aerodynamics \cite{Jouk}). The \emph{Joukowski map} is:
\begin{equation}
  \label{eq:jouk}
  J(z) = \frac{1}{2}(z +z^{-1}).
\end{equation}
Restricted to the unit circle the map returns the real part of \(z\).
This step 
requires the function to be even. Even functions will be
mapped to a function on the unit circle with the property
\(f(z)=f(z^{-1})\). 
Composing these maps together:
\[ JM(y)= \frac{y^2-1}{y^2+1}, \qquad   \qquad M^{-1}J^{-1}(x)=\sqrt{
  \frac{1+x}{1-x}}.\]
This enables the construction of
 a function  \(\bold{F}(x)\) on the interval from a 
given function \(F(y)\) on the real line as follows:
\[\bold{F}(x)=F(M^{-1}J^{-1}(x)). \]
Then the algorithm of the next section can be used to rationally approximate
\(\bold{F}(x)\) on \([-1,1]\) by  \(\tilde{\bold{F}}(x)\). Suppose that
the error in this approximation in the \(L_{\infty}\) is \( \lambda
\). 

Then map the function back to the real line by:
\[\tilde{F}(y)= \tilde{\bold{F}}(JM(y)).\]
Furthermore \(\tilde{F}(y)\) is a rational function on the real line that approximates
\(F(y)\) on the real line with the \(L_{\infty}\)  error at most \(\lambda\).
 
The maps described above are combined with the
Carath\'{e}odory-Fej\'{e}r algorithm in Chebfun to give the code used
for the numerical examples in the next section. 
  
\begin{rem}
If the kernel is odd i.e. \(F(y)=-F(-y)\)  the kernel can be
squared to produce an even kernel. Then the approximation can be
produced as above and square rooted. Note that the explicit multiplicative
Wiener-Hopf factorisation of the square root of a rational function is
done by inspection in the same way as for the rational function.  Also note that any function can be written as a sum of
even and odd function, thus the above extends to a general function. 
\end{rem}

\subsection{General Functions}

For a general kernel the above map can be
modified. The above method contains the following ideas. The real line is
conformally mapped to the circle. Then only half of the circle is
mapped to the interval \([-1, 1]\); however this poses no problem since the other half
is the same due to the function being even. The function is
approximated and mapped back to the unit circle. 
The modification is needed  to ensure that half of the
 function is not discarded. Thus, after mapping the
real line to the circle, apply the map \(S: \text{ } z \to z^2\); this makes the
values run twice as fast and the whole function now fits on the half
circle. Next, as before, map this half circle to the interval,
approximate, and map back. The values that are taken on the half circle
are spread out to the circle by the map \(S^{-1}: \text{ }  z \to
\sqrt{z}\). 
\begin{figure}[htbp]
  \centering
  \includegraphics[scale=0.5,angle=0]{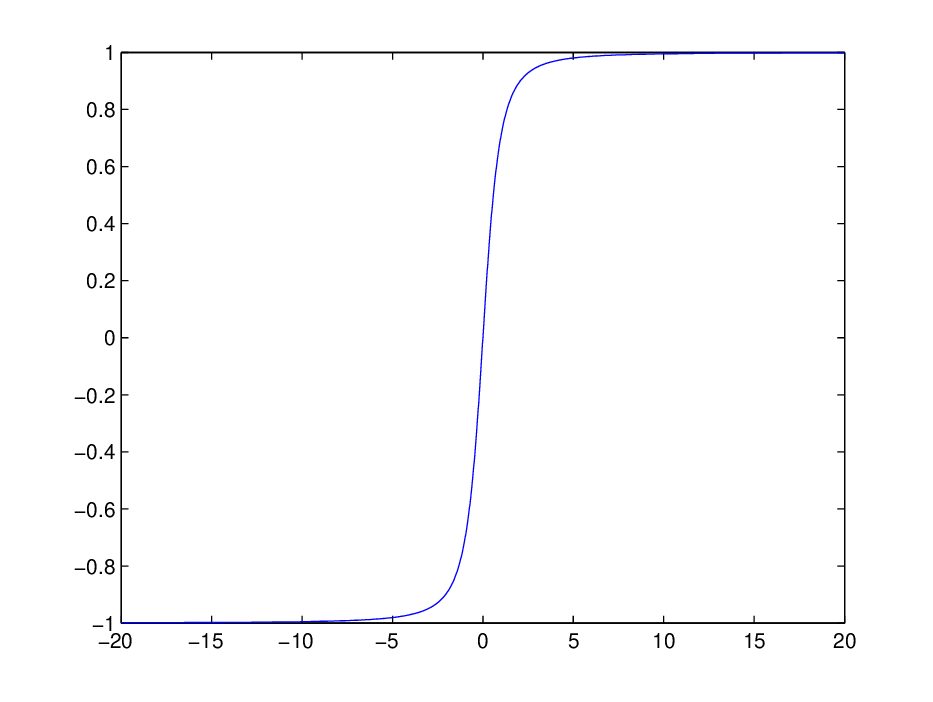}
   \caption{Showing the map JSM(y) of the real line (plotted from \(-20\) to \(20\)) to the interval \([-1,1]\).}
 \label{fig:all_map}
 \end{figure}
Calculating the resulting map gives:
\begin{equation}
\label{eq:all_map}
 JSM(y)= \frac{y}{\sqrt{y^2+1}}, \qquad   \qquad M^{-1}S^{-1}J^{-1}(x)=
  \frac{x}{ \sqrt{1-x^2}},
\end{equation}
 see Figure \ref{fig:all_map}.

\begin{rem}
Before using a map it is beneficial to apply  a rescaling and a
shift (i.e. M\"obious map) to
the real line so that the part of the function which has a fast 
changing gradient fits in the interval \([-5,5]\). This 
part of the real line is well resolved, see  Figure \ref{fig:all_map}.
\end{rem}

\subsection{Associated Orthogonal Rational Functions}

In this section the properties of the map \eqref{eq:all_map} will be 
studied. This will be done by considering the new basis functions which
 are created by this map. A starting point is to choose a basis function 
 for the circle;  a good choice for this is the Fourier basis. Then the 
 Joukowski map will create the new basis for the interval \([-1,1]\) 
  as the image of the Fourier  basis, under the change of 
 coordinates, \cite{Spectral}*{Sec 1.6}. This new basis 
 is the Chebyshev polynomials of the first kind (they are also the Fabe polynomials on \([-1,1]\) 
 up to a multiplicative factor  \cite{Fabe}). Furthermore, the mapping 
 \(M^{-1}S^{-1}J^{-1}(x)\) from \([-1,1]\) to \([-\infty, \infty]\)
  will create a new basis (from Chebyshev polynomials) called 
  \emph{Rational Chebyshev Functions}, \(TB_n\) \cite{BoydTB_n}. 
They are defined as:
\[TB_n(y)=T_n \big(\frac{y}{\sqrt{(y^2+1)}}\big),\]
where the \(T_n\) are the Chebyshev polynomials of the first kind and
\(\frac{y}{\sqrt{(y^2+1)}}\) is the map \(JSM(y)\), see  \eqref{eq:all_map}. Note that, despite the name, \(TB_n\) are not 
all rational:  the
\(TB_{2n+1}\) are rational functions divided by \(\sqrt{(y^2+1)}\) (the \(TB_{2n}\) are all rational function).

A useful  property of  \(TB_n\) is  the shape of the domain of
convergence. This is especially informative when the kernel has no singularities on the whole real
line including infinity. For standard basis functions the domain of convergence has a specific 
shape and the size is determined by the position  
of singularities \cite{BoydTB_n}. For example, Taylor series converge
on a circle and Chebyshev series converge on an ellipse, the sizes of which are dictated by the position of the 
singularities. 
The shape of the domain for \(TB_n\) is the exterior of two circles
which are the  image of lines parallel to the 
real axis under the  M\"obious map \(\frac{i(1+z)}{-1+z}\). This is
 derived from  the corresponding domain of convergence for
Chebyshev  polynomials which in turn comes from the Fourier series  \cite{BoydTB_n}.

So in particular  those base functions are well suited for functions
which have singularities in the bounded part of the complex plane,
like Example \ref{sec:1a}. This compares favourably to the domain of
convergence of Taylor expansion for a function like the one in Example  \ref{sec:1a}. In particular, 
this explains why all methods of approximation which are based on the
Taylor expansion  do not produce good results on the whole real line
(e.g. one point Pad\'{e} approximation).

 For functions with a singularity at infinity the following 
theorem by Boyd is applicable.

\begin{thm} [Convergence of the Fourier series for an algebraically
  decaying or asymptotically  constant function] \cite[Sec 5]{BoydTB_n}
If a function \(u(y)\) is free from singularities on the real axis and has the inverse power expansion
\begin{equation}
\label{eq:asym}
u \sim c_0+c_1/y+c_2/y^2+ \dots
\end{equation}
as \(y \to \infty\) and a similar series as \(y \to -\infty\), then the coefficients of its representation 
as a Fourier series in the new coordinate \(t\) (where \(z=e ^{2 \pi i
  t}\) and \(z=SM(y)\), see  \eqref{eq:all_map}),
\begin{equation}
\label{eq:fourier}
u(y)=\sum_{n=0}^\infty a_n \cos (nt)+ \sum_{n=1}^\infty b_n \sin (nt),
\end{equation}
will have exponential convergence in the sense that \(|a_n|\) and \(|b_n|\) decrease with n faster 
than any finite inverse power of \(n\).

In the case of \eqref{eq:asym} converging then \eqref{eq:fourier} converges geometrically.

In the case of \eqref{eq:asym} being asymptotic but divergent (then
\(u(y)\) will be singular  at infinity but all the derivatives will be
bounded there)  then \eqref{eq:fourier} converges subgeometrically.
\end{thm}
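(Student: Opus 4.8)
The plan is to recast the statement as a question about the smoothness of $u$ in the new coordinate $t$ (defined by $z=SM(y)$, $z=e^{2\pi i t}$), and then to read off the decay of the Fourier, equivalently Rational Chebyshev, coefficients from the classical dictionary between the regularity of a periodic function and the size of its coefficients. Set $\hat u(t)=u(y(t))$. The first observation is that the real line is carried onto (an arc of) the unit circle with $y\to\pm\infty$ corresponding to the endpoint(s) $t=t_\infty$ of the range of $t$, and that a direct computation expresses $1/y$ as a real-analytic function of $t$ near $t_\infty$ that vanishes to first order there (it is $\tan$ of a fixed multiple of $t-t_\infty$). Away from $t_\infty$ the coordinate change $t\mapsto y(t)$ is real-analytic with $y(t)$ finite, so there $\hat u$ is as regular as $u$ is on $\mathbb{R}$; since the kernels to which the theorem is applied are analytic not merely on $\mathbb{R}$ but in a strip about it, $\hat u$ extends holomorphically to a complex neighbourhood of every interior point of the $t$-range.

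\textbf{The point at infinity.} The heart of the matter is $t_\infty$. With $w=1/y$, near $t_\infty$ the variable $w$ is an analytic function of $t$ vanishing simply at $t_\infty$, and the hypothesis says that $u$, viewed as a function of $w$, carries the formal series $c_0+c_1w+c_2w^2+\cdots$. The crucial step is that this asymptotic expansion may be differentiated term by term: applying Cauchy's integral formula to $u$ on a disc of fixed radius contained in the strip and inserting the expansion on the bounding circle produces, for each $k$, the termwise-differentiated expansion for $u^{(k)}$; hence $u$, regarded as a function of $w$, is genuinely $C^\infty$ at $w=0$ and not merely the possessor of an asymptotic power series there. Composing this function of $w$ with the analytic relation between $w$ and $t$ shows that $\hat u$ is $C^\infty$ at $t_\infty$, and together with the interior analyticity this makes $\hat u$ a $C^\infty$ function of $t$ over the whole period. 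Integrating the Fourier integrals for $a_n$ and $b_n$ by parts $m$ times, the boundary terms vanishing by periodicity, gives $|a_n|,|b_n|\le C_m\,n^{-m}$ for every $m$, which is the first assertion.

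\textbf{The two refinements.} If the series $c_0+c_1w+\cdots$ converges, it is holomorphic in a disc about $w=0$, and composing with the analytic relation between $w$ and $t$ makes $\hat u$ holomorphic in a genuine complex neighbourhood of $t_\infty$ as well; by compactness of the period, $\hat u$ is then holomorphic and bounded in a strip $|\mathrm{Im}\,t|<\rho$ for some $\rho>0$, and shifting the contour in the Fourier integral gives $|a_n|,|b_n|=O(e^{-\rho n})$, geometric convergence. If the series is asymptotic but divergent, $\hat u$ is $C^\infty$ but not analytic at $t_\infty$; one quantifies this by a Gevrey-type bound on the $c_n$ --- equivalently $\|\hat u^{(m)}\|_\infty\le C\,K^m\,(m!)^{s}$ with $s>1$, obtained from the differentiated expansion and a Cauchy estimate for the remainder --- and then optimising the elementary bound $|a_n|\le \|\hat u^{(m)}\|_\infty\,n^{-m}$ over $m$ yields $|a_n|,|b_n|\le C\exp(-c\,n^{1/s})$: faster than any power, but slower than geometric, i.e. subgeometric.

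\textbf{Main difficulty.} The delicate point, which I would treat most carefully, is the differentiability of the asymptotic expansion: a function analytic only on the real axis and carrying an inverse-power expansion at infinity need not yield even a $C^{2}$ function after the coordinate change, so the argument genuinely relies on $u$ being analytic in a neighbourhood of $\mathbb{R}$ --- used via Cauchy's formula on discs of fixed radius --- both to upgrade ``has an asymptotic series at $\infty$'' to ``is $C^\infty$ at $t_\infty$'' and to supply the quantitative remainders behind the geometric and subgeometric statements. A secondary check is that, if the expansions at $+\infty$ and $-\infty$ correspond to the same value of $t$, they must share all their coefficients --- which is exactly what asymptotic constancy provides; otherwise $\hat u$ would be only finitely smooth at $t_\infty$ and the decay correspondingly weaker.
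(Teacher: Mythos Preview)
The paper does not prove this theorem; it is quoted verbatim from Boyd \cite[Sec.~5]{BoydTB_n} and used as an external input, so there is no ``paper's own proof'' to compare against. Your strategy---transport $u$ to the periodic variable $t$, show the resulting function is $C^\infty$ (resp.\ analytic) at the image of $y=\infty$, and read off coefficient decay from the standard regularity--decay dictionary---is exactly the classical argument and is essentially what Boyd does.

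Two comments on the details. First, you are right that the step ``asymptotic expansion at $\infty$ $\Rightarrow$ $C^\infty$ at $t_\infty$'' genuinely needs analyticity in a strip, used via Cauchy estimates on discs of fixed radius; the theorem's hypothesis ``free from singularities on the real axis'' is to be read in that sense (and in this paper's setting the kernels are analytic in a strip by assumption), so flagging this is appropriate. Second, your treatment of the subgeometric case overreaches: a divergent asymptotic series does \emph{not} automatically yield Gevrey control on the $c_n$ or on $\|\hat u^{(m)}\|_\infty$, so the bound $\exp(-c\,n^{1/s})$ is not justified from the stated hypotheses. Fortunately it is also not needed. ``Subgeometric'' here means only ``faster than every power of $1/n$ but not geometric''. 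The first half you already have from the $C^\infty$ argument; for the second half argue by contraposition: geometric decay of the Fourier coefficients would force $\hat u$ to extend holomorphically to a strip $|\mathrm{Im}\,t|<\rho$, hence to a neighbourhood of $t_\infty$, and composing with the analytic relation between $t$ and $w=1/y$ would make $u$ analytic at $w=0$, so the power series \eqref{eq:asym} would converge---contradiction. Replace the Gevrey paragraph with this contrapositive and the argument is complete.
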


\begin{rem}
The theorem covers precisely the class of functions that arises as
kernels in the  additive and multiplicative Wiener-Hopf factorisation.
In the additive case the kernel must decrease faster than some power
and in the multiplicative case they are asymptotically \(1\).
\end{rem}

The above theorem shows one of the strengths of the \(TB_n\) compared to other base functions on an unbounded interval e.g. the 
Hermite and sinc functions. The latter only converge algebraically for algebraically decaying functions.

\section{Rational Approximation}
\label{sec:rational}

The best rational approximation in
\(L_\infty\) is not practical to use, see \cite{Tref_new}.
This section describes a  near best method of rational approximation
named  AAK (Adamjan-Arov-Kre{\u\i}n) \cite{AAK} in the
 general case and 
Rational Carath\'{e}odory-Fej\'{e}r \cite{Gut_old, Tref_old}
in the case of real valued functions. 
The AAK theory is briefly reviewed at the end. Unfortunately it seems
that there is no  software that performs AAK approximation; only software
for  Rational Carath\'{e}odory-Fej\'{e}r  have been developed
that is 
Chebfun (MATLAB)  \cite{Tref_new}.

\subsection{Real Rational Approximation}

Below a brief summary of the method from \cite{Tref_old_real} is presented. 
Let \(F(x)\) be a real continuous function on \([-1,1]\). For any
natural number \(M\), \(F(x)\) has a partial Chebyshev expansion:
\[F(x)= \sum_{k=0}^Ma_k T_k(x)+G_M(x)= F_M(x)+G_M(x),\]
where
\[a_k= \frac{2}{\pi} \int_{-1}^{1} F(x) T_k(x)
\frac{\text{d}x}{\sqrt{1-x^2}},\]
and \(T_k(x)\) are Chebyshev polynomials of the first kind.

An application of the  Joukowski map (see \eqref{eq:jouk}) to \(F_M(x)\)
produces an associated function on the unit circle:
\[ F_M(x)=\frac{1}{2} f_M(z), \quad \text{where}  \quad
f_M(z)=\sum_{k=-M}^Ma_m z^k.\]
The above is true because of the intimate connection of the
Chebyshev polynomials and Joukowski transform given by:
\[T_k(x)=\frac{1}{2}(z^k +z^{-k}).\]
Define:
\[f^+(z)=\sum_{k=m-n}^Ma_k z^k,\]
where we are seeking a \([m,n]\) rational approximation. 
 Next approximate \(f^+(z)\) on the unit circle by an extended rational
function of the form:
\begin{equation}
  \label{eq:app}
  \tilde{r}(z)=\frac{\sum_{k=- \infty}^m d_k z^k}{\sum_{k=0}^n e_k
  z^k},
\end{equation}
where the numerator is a bounded analytic function in \(|z|>1\)
and the denominator has no zeroes in \(|z|<1\). Call a class of
functions of the above form \emph{\(\tilde{R}_{m,n}\)}. Note that this is not
a typical class to approximate by, but it is the one for which a neat
solution exists.  After the
approximation \eqref{eq:app} is obtained it can be truncated to get a rational approximation. 
To obtain an approximation  in \(\tilde{R}_{m,n}\)  consider  a real
symmetric Hankel matrix:
\begin{equation*}
 \mathbf{X} = \left(
  \begin{array}{cccc}
   a_{m-n+1} & a_{m-n+2} & \ldots & a_m\\
   a_{m-n+2} &  &  &0\\
   \vdots &  &  &\vdots\\
a_m & 0 &  \ldots & 0
  \end{array} \right).
\end{equation*}
Let \(\lambda_i\) be eigenvalues of \(X\) arranged in decreasing order  by  magnitude of absolute
value.  And let \( (u_1, \dots, u_{M+n-m})\) be
an eigenvector for the \(\lambda _{n+1}\) eigenvalue.
Then the following theorem is true:
\begin{thm}[Takagi]
The analytic function \(f^+\) has a unique best approximation
\(\tilde{r}\) on the unit circle \(|z|=1\) in \(\tilde{R}_{m,n}\)
given by:
\[\tilde{r}(z)=b(z) -f^+(z)\]
where \(b\) is the finite Blaschke product

\[b(z)=\lambda_{n+1}z^M  \frac{u_i+ \dots + u_{M+n-m} z^
  {M+n-m}}{u_{M+n-m}+ \dots +u_1 z^{M+n-m-1}}.\]

And the approximation error in \(L_\infty\) norm on the unit circle is
\(|\lambda_{n+1}|\).  
\end{thm}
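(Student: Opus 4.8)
The plan is to recognise this as the finite-rank special case of the classical Takagi / Adamjan--Arov--Kre{\u\i}n (AAK) theorem on optimal Hankel-norm approximation, with the matrix $\mathbf{X}$ playing the role of the Hankel operator attached to the symbol $f^{+}$. First I would set up the operator-theoretic dictionary: after dividing out the monomial $z^{m}$ and reflecting $z\mapsto z^{-1}$, the class $\tilde R_{m,n}$ is identified with $H^{\infty}(\mathbb D)+\mathcal R_{n}$, the bounded analytic functions on the disc plus the functions having at most $n$ poles inside it. Here the denominator $\sum_{k=0}^{n}e_{k}z^{k}$ supplies the $\le n$ poles, its zero-freeness in $|z|<1$ places them inside, and the numerator being bounded analytic in $|z|>1$ with top frequency $m$ accounts for the $H^{\infty}$ part. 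Since $f^{+}$ is a Laurent polynomial with frequencies in $\{m-n,\dots,M\}$, the associated Hankel operator $\Gamma$ has finite rank and, in the monomial basis, its matrix is exactly the real symmetric Hankel matrix $\mathbf{X}$; being real symmetric, $\mathbf{X}$ has singular values $|\lambda_{i}|$ and Schmidt (singular) vectors that may be taken real, coinciding with its eigenvectors.

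Next I would establish the lower bound: no element of $\tilde R_{m,n}$ is closer to $f^{+}$ than $|\lambda_{n+1}|$. This is the routine half of AAK. For any $\tilde r\in\tilde R_{m,n}$ the operator $g\mapsto P_{-}\!\big((f^{+}-\tilde r)g\big)$ differs from $\Gamma$ by an operator of rank at most $n$ (the contribution of the $\le n$ poles of $\tilde r$), so the Courant--Fischer min-max characterisation of singular values gives $\|f^{+}-\tilde r\|_{\infty}\ge s_{n}(\Gamma)=|\lambda_{n+1}|$, where $s_{n}$ is the $(n+1)$-st largest singular value.

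The substantive step is to exhibit an approximant attaining this value and to identify it with the displayed $b$. I would take the eigenvector $(u_{1},\dots,u_{M+n-m})$ for $\lambda_{n+1}$, form the polynomial $U(z)=\sum_{j}u_{j}z^{j-1}$ and its coefficient-reversal $U^{\sharp}(z)=z^{\deg U}U(1/z)$ (which has real coefficients), and verify that the functions built from $U$ and $U^{\sharp}$ constitute a Schmidt pair of $\Gamma$: the eigenvalue identity $\mathbf{X}u=\lambda_{n+1}u$ transcribes precisely into $\Gamma$ carrying one to $\lambda_{n+1}$ times the other. From the Schmidt-pair relation one reads off that $b(z)=\lambda_{n+1}z^{M}U^{\sharp}(z)/U(z)$ agrees with $f^{+}$ modulo $\tilde R_{m,n}$, i.e. the candidate $\tilde r$ with $f^{+}-\tilde r=\pm b$ lies in $\tilde R_{m,n}$, its $\le n$ poles being the zeros of $U$ inside the disc and the top frequency of its numerator being $m$; this is exactly where the frequency support of $f^{+}$ and the zero locations of $U$ are used. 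Finally, since the Blaschke-type ratio $U^{\sharp}/U$ is unimodular on $|z|=1$ and $|z^{M}|=1$ there, $|b|\equiv|\lambda_{n+1}|$ on the circle, so the distance equals $|\lambda_{n+1}|$ and $\tilde r$ is a best approximant.

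For uniqueness I would invoke the standard AAK argument: any two best approximants give errors that are unimodular of the same modulus $|\lambda_{n+1}|$ and of the same index, and a strict-convexity/extremality argument applied to their average — equivalently, the simplicity of $\lambda_{n+1}$ as an eigenvalue of $\mathbf{X}$ in the generic case — forces them to coincide. I expect the main obstacle to be the bookkeeping in the substantive step: checking, with the correct indexing of the Chebyshev coefficients $a_{k}$, that $f^{+}-\tilde r$ really lands in $\tilde R_{m,n}$ with exactly the prescribed numerator degree and pole count, and that the extremal error has precisely the reversed-eigenvector Blaschke form written in the statement. The delicate point throughout is that $\tilde R_{m,n}$, rather than the classical rational class $R_{m,n}$, is the natural class in which the AAK value $|\lambda_{n+1}|$ is actually attained, so one must take care that enlarging the numerators to merely bounded-analytic functions in $|z|>1$ does not push the distance below $|\lambda_{n+1}|$.
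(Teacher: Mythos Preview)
The paper does not prove this theorem; it is quoted without proof from Trefethen--Gutknecht \cite{Tref_old_real} as background for the Carath\'eodory--Fej\'er algorithm, and the more general complex-valued version is likewise only cited from \cite{AAK} in the following subsection. Your outline is therefore not being compared against any argument in the paper. What you have written is in fact the standard AAK/Hankel-operator proof that underlies both cited references: identify $\tilde R_{m,n}$ (after the monomial shift) with $H^{\infty}+\mathcal R_{n}$, reduce best approximation to approximation of the Hankel operator $\Gamma$ by operators of rank at most $n$, use the min--max characterisation of singular values for the lower bound $s_{n}(\Gamma)=|\lambda_{n+1}|$, and build the extremal error from a Schmidt pair, which for a real symmetric $\mathbf X$ collapses to an eigenvector and its coefficient reversal. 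The approach is correct and there is no missing idea.

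One caution on uniqueness: the statement asserts uniqueness unconditionally, not only ``in the generic case'' of a simple $\lambda_{n+1}$. Your sketch hedges here. In the AAK theory uniqueness of the best approximant holds even when the singular value has multiplicity; what may fail to be unique is the eigenvector $(u_{1},\dots,u_{M+n-m})$ and hence the particular representation of $b$, but the resulting $\tilde r$ is independent of that choice. Establishing this is the one place where the argument needs more than Courant--Fischer, and it is the only point at which your proposal is genuinely thin.
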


Once the approximation is obtained, it is mapped back from the
unit circle to the unit interval by the inverse of the Joukowski
transform.  A subsequent truncation gives a near best rational
approximation on the interval.

 This  is a fast and efficient way of computing rational
approximations. Furthermore, it enables to predict the error on the
approximation even before the approximation is computed by considering eigenvalues. 

\subsection{AAK Approximation}

This section presents more general results to the ones covered in
the previous section. These apply to complex valued functions on the
unit circle and are
taken from  \cite{AAK}.

\begin{mydef}
Given a natural number \(n\), define \( H_\infty
^{n} \) a class of bounded functions on the unit circle which can be expressed as:
\[ g(z )=r(z )+ h(z ) ,\]
where \(r(z )\) is a rational function that has no more than \(n\) poles all inside the unit circle 
and \(h(z ) \in H_\infty \).
\end{mydef}

The AAK approximation solves the following problem:

Given a function \(f(z) \in L_\infty\) and \(n\) natural number, find
the best approximation in the \(L_\infty\) norm  from functions in the 
\( H_\infty ^{n} \).

In other words it is required to find:
\[D_n(f)= \inf_{h \in  H_\infty ^{n} } ||f-h||_{\infty}, \]
and a function \( h(z) \in  H_\infty ^{n}\)  (if it exists) such that:
\[ ||f-h||_{\infty} =D_n(f).\]
The solution to the above is presented in the next theorem.

\begin{thm}\cite{AAK}
Let  \(f(z) \in L_\infty\), then  \(D_n(f)=s_n(M)\) where \(M\) is the
Hankel matrix build out of  Fourier coefficients of \(f(z)\) and \(s_n\) is the
\(n^{th}\) singular value of it. Moreover,

\[h(z)=f(z)-s_n \frac{\eta_-(z)}{\xi_+(z)},\]

where \( \{ \xi, \eta \}\)  is a Schmidt pair for a Hankel matrix
\(M\) and where:

\[\xi_+=\sum_{j=1}^\infty \xi_j z^{j-1}, \qquad \eta_-=\sum_{j=1}^\infty \eta_j z^{-j}.\]
\end{thm}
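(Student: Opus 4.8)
This is the Adamjan--Arov--Kre{\u\i}n theorem, and the plan is to prove its two halves --- the identity \(D_n(f)=s_n(M)\) and the explicit formula for the optimal \(h\) --- by the classical route, working with the Hankel operator \(\Gamma_f\colon H^2\to\overline{zH^2}\) defined by \(\Gamma_f u=P_-(fu)\), which in the monomial basis is exactly the Hankel matrix \(M\); here \(P_\pm\) are the Riesz projections onto the closed spans of the nonnegative and of the strictly negative powers of \(z\).

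For the lower bound \(D_n(f)\ge s_n\), I would use that a Hankel operator sees only the anti-analytic part of its symbol: if \(g=r+h\in H_\infty^n\) with \(r\) rational having at most \(n\) poles inside the disc and \(h\in H_\infty\), then \(\Gamma_g=\Gamma_r\), and by Kronecker's theorem \(\operatorname{rank} \Gamma_r\) equals the number of poles of \(r\) in the disc, hence is \(\le n\). Since \(\Gamma_f-\Gamma_g=\Gamma_{f-g}\) and \(\|\Gamma_{f-g}\|\le\|f-g\|_\infty\) (from the elementary bound \(\|P_-((f-g)u)\|_2\le\|f-g\|_\infty\|u\|_2\)), the min--max characterisation \(s_n=\min\{\,\|\Gamma_f-R\|\colon\operatorname{rank} R\le n\,\}\) gives \(s_n\le\|\Gamma_f-\Gamma_g\|\le\|f-g\|_\infty\); taking the infimum over admissible \(g\) yields \(D_n(f)\ge s_n\).

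For the matching construction, let \(\{\xi,\eta\}\) be a Schmidt pair for \(s:=s_n\), so that with \(\xi_+\in H^2\) and \(\eta_-\in\overline{zH^2}\) as in the statement one has \(P_-(f\xi_+)=s\eta_-\) and \(P_+(\overline{f}\,\eta_-)=s\xi_+\), and set \(h:=f-s\,\eta_-/\xi_+\). The first step is to show \(|f-h|=s\) a.e.\ on \(|z|=1\): from \(P_-(f\xi_+)=s\eta_-\) one gets \(q:=f\xi_+-s\eta_-=P_+(f\xi_+)\in H^2\), and then multiplying \(q\) by \(\overline{\eta_-}\) and substituting the conjugate of \(P_+(\overline{f}\,\eta_-)=s\xi_+\) shows that the real-valued function \(|\xi_+|^2-|\eta_-|^2\) agrees a.e.\ with the boundary values of an analytic (\(H^1\)) function, hence is constant; since any Schmidt pair satisfies \(\|\xi_+\|_2=\|\eta_-\|_2\), that constant is \(0\), so \(|\eta_-/\xi_+|=1\) a.e.\ and \(|f-h|=s\) a.e.\ --- in particular \(h\in L_\infty\) because \(f\in L_\infty\). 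The second step is \(h\in H_\infty^n\): writing \(h=q/\xi_+\) and factoring \(q\) and \(\xi_+\) into inner and outer parts, the constant-modulus identity forces their outer parts to coincide up to a unimodular constant, so \(h\) reduces to a unimodular constant times a ratio of inner functions; a count of the zeros of \(\xi_+\) in the disc --- bounded by using that \(s\) sits at index \(n\) in the singular-value list, which controls the relevant eigenspace of \(M^*M\) and the degree of the inner part of \(\xi_+\) --- identifies this ratio as a unimodular constant times a finite Blaschke product of degree \(\le n\), whence \(h=(\text{rational with }\le n\text{ poles in the disc})+(\text{an }H_\infty\text{ function})\). Combining the two steps gives \(\|f-h\|_\infty=s=s_n\) with \(h\in H_\infty^n\), so \(D_n(f)\le s_n\); together with the lower bound this is the asserted identity, and \(h\) is an optimiser of the stated form.

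The step I expect to be the main obstacle is the pole count in the second step: showing that the singular-inner part of \(\xi_+\) is absorbed into \(q\) and that the Blaschke part of \(\xi_+\) has degree at most \(n\). This is exactly where one must use that \(s_n\) is the \(n\)-th singular value, and it is also where a multiple singular value must be handled --- different Schmidt pairs produce different candidates \(h\), and one has to argue that an appropriate choice lands in \(H_\infty^n\). By comparison the constant-modulus identity of the first step is a short computation once one fixes the Hardy-space bookkeeping (which of the functions lies in \(H^2\) and which in \(\overline{zH^2}\)), and the lower bound is routine.
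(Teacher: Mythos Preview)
The paper does not prove this theorem: it is stated with a citation to the original Adamjan--Arov--Kre{\u\i}n paper and is used as a black box, so there is no proof in the paper to compare against. Your sketch is the standard route to AAK, and the lower bound via Kronecker's theorem together with the singular-value min--max principle is exactly right.

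Two remarks on the upper-bound half. In your constant-modulus argument you say that \(|\xi_+|^2-|\eta_-|^2\) equals an \(H^1\) boundary function and hence is constant, then use \(\|\xi_+\|_2=\|\eta_-\|_2\) to pin the constant to zero. In fact the computation places that real function in \(zH^1\) (all nonpositive Fourier coefficients vanish), and a real \(zH^1\) function is identically zero without any norm input; this is a minor bookkeeping point, not a gap. More substantively, you write that ``different Schmidt pairs produce different candidates \(h\), and one has to argue that an appropriate choice lands in \(H_\infty^n\)''. Part of the AAK theorem is precisely that the unimodular ratio \(\eta_-/\xi_+\) is \emph{independent} of which Schmidt pair for \(s_n\) one uses, so \(h\) is uniquely determined; establishing this invariance, and bounding the degree of the inner factor of \(\xi_+\) by \(n\) via the dimension of the Schmidt subspace, is indeed the technical heart, and you are right to flag it as the step that needs the most work.
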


The next section illustrates theory with examples.

\section{Numerical Examples}

In this section the numerical examples are provided to illustrate the
theory (performed in Chebfun, MATLAB).

\subsection{Example 1a}
\label{sec:1a}
\begin{figure}[htbp]
  \centering
  \includegraphics[scale=0.45,angle=0]{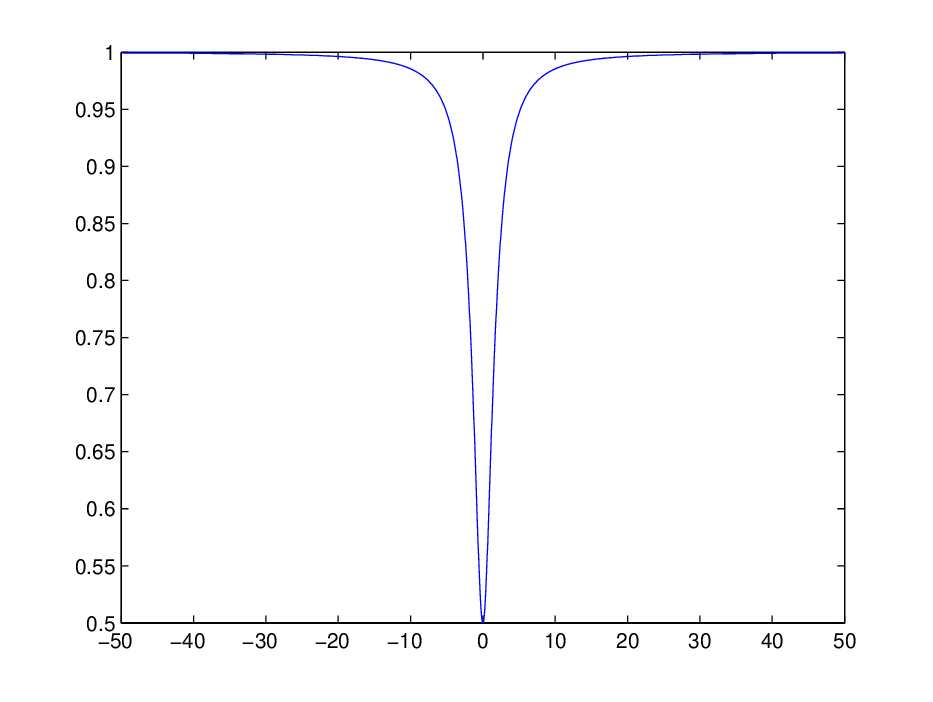}
   \caption{The function to be approximated \(F(y)=\sqrt{ \frac{(y^2+1)}{(y^2+4)}}.\) }
 \label{fig:1ex1}
 \end{figure}
The first example (see Figure \ref{fig:1ex1}) is 
\begin{equation}
  \label{eq:example1}
  F(y)=\sqrt{ \frac{(y^2+1)}{(y^2+k^2)}}.
\end{equation}
Take the finite branch cut from \(i\) to \(ki\) and from  \(-i\) to \(-ki\).
This kernel is closely associated with the matrix kernel factorisation
from problems in acoustics and elasticity and was studied by I. D
Abrahams \cite{Pade}. The case \(k=2\) will be considered in the numerical examples.
The first approach is to approximate this given function on an
interval, a process called domain truncation. On the surface, the
results look  promising (see top
Figure~\ref{fig:3_error}); the error decreases as the degree of  the
numerator polynomial is increased.
\begin{figure}[htbp]
  \centering
  \includegraphics[scale=0.6,angle=0]{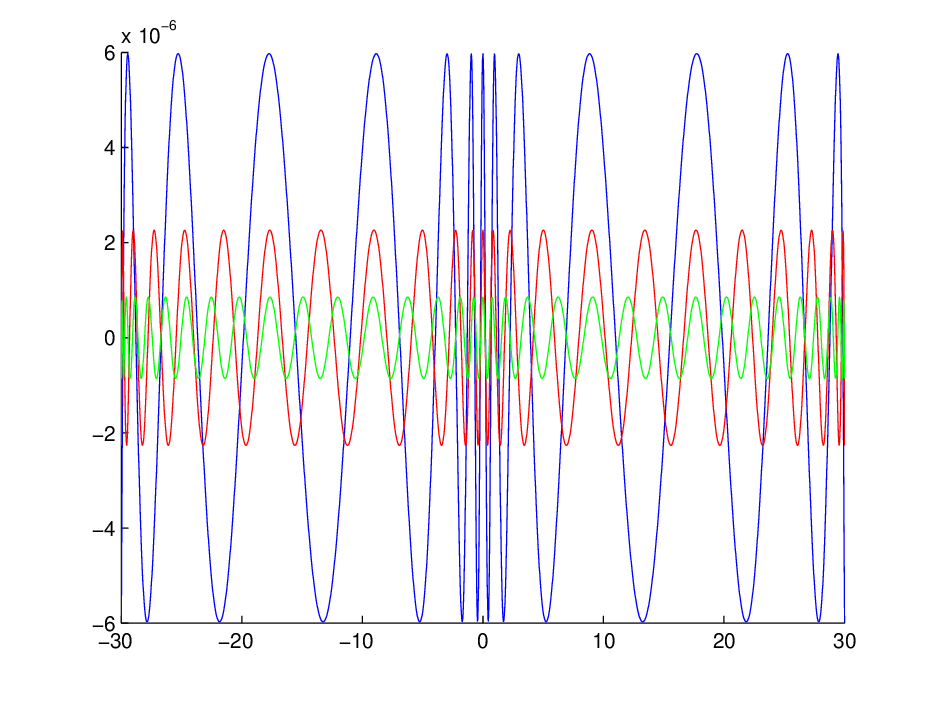}
 \includegraphics[scale=0.6,angle=0]{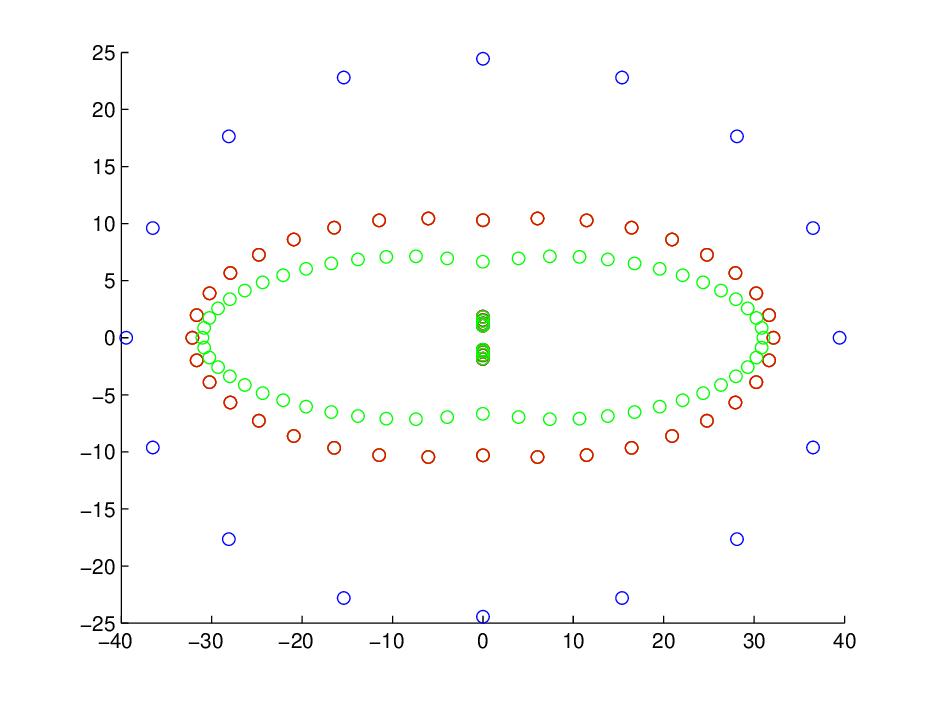}
 \caption{The decreasing  error of [20,4], [40,4] and [60,4]  rational
   approximations (top). The poles and zeroes of [20,4] [40,4] [60,4]
   rational approximations (bottom).}
 \label{fig:3_error}
 \end{figure}
Nevertheless there are  problems, the most obvious being that [20,4]
or any other approximations plotted in top Figure~\ref{fig:3_error} have 
very different behaviours at infinity than \(F(y)\). The error will be
small but only on the given interval and will not be controlled
outside it.
It might be tempting to try to rectify this, since  \(F(y)-1\)
will be zero outside a sufficiently large interval. One might suggest to look
at \([n,m]\) approximations where \(n\) is smaller than \(m\): this
type of function will go to zero at infinity.  But, although the error on
most of the real axis is very small, just outside the interval it
tends to be  greater (and much larger than it is inside the
interval). 
Another feature of this type of approximation is how the poles and
zeroes are positioned. Although some of the poles are positioned on the
branch cuts \([i,2i]\) and \([-i,-2i]\), there are a lot of
singularities introduced elsewhere (see bottom Figure~\ref{fig:3_error}).

 In the next section, the mapping  constructed in Section
 \ref{sec:map} will be
used to overcome the described
problems. This will also result in smaller degree of the polynomials. 

\subsection{Example 1b}
 
The previous example \eqref{eq:example1} is treated with the mapping proposed in Section \ref{sec:map} for even
functions. The function mapped to the interval \([-1,1]\)  becomes:
\[\bold{F}(x)=\sqrt{\frac{2}{5-3x}}.\]
\begin{figure}[htbp]
  \centering
  \includegraphics[scale=0.4,angle=0]{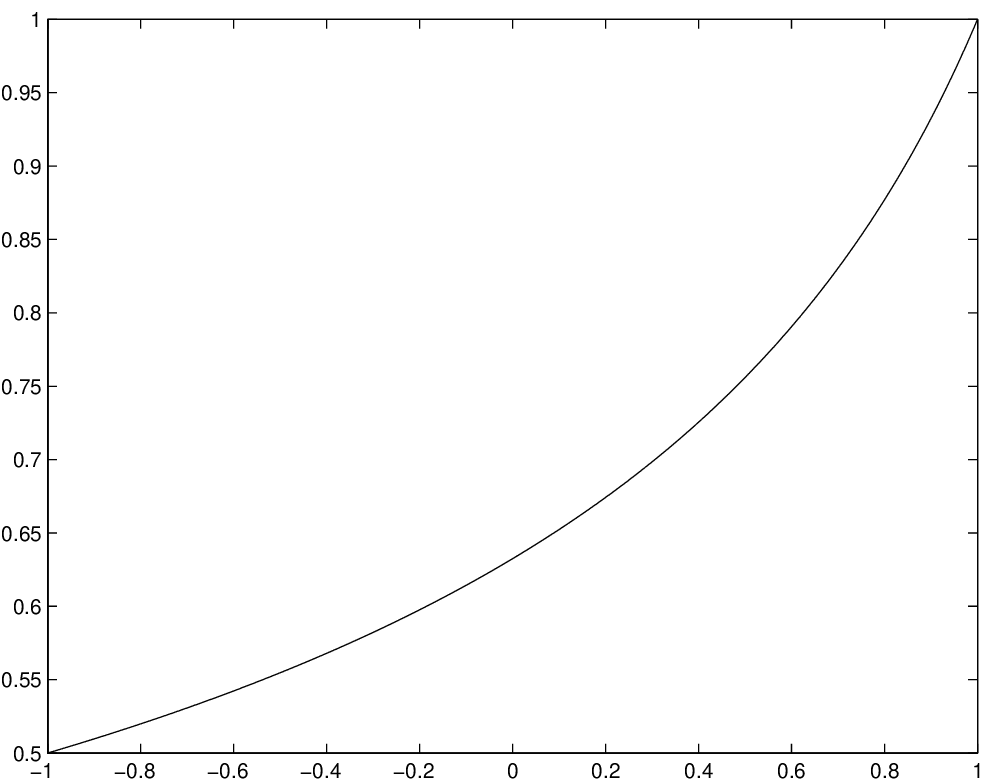}\hfill
   \caption{The new function to be approximated \(\bold{F}(x)=\sqrt{\frac{2}{5-3x}}\).}
 \label{fig:1ex1maped}
 \end{figure}
In Figure \ref{fig:1ex1maped} the new function is plotted;
notice that in this form it is much easier to approximate, since it has a slow changing gradient. 
 This is confirmed by the error curve for the \([4,4]\)
approximation; the error is of order \(10^{-9}\). Furthermore, a
better result is obtained with \([5,5]\) approximation with error of
order \(10^{-12}\) (see Figure~\ref{fig:error10}). 
\begin{figure}[htbp]
  \centering
  \includegraphics[scale=0.54,angle=0]{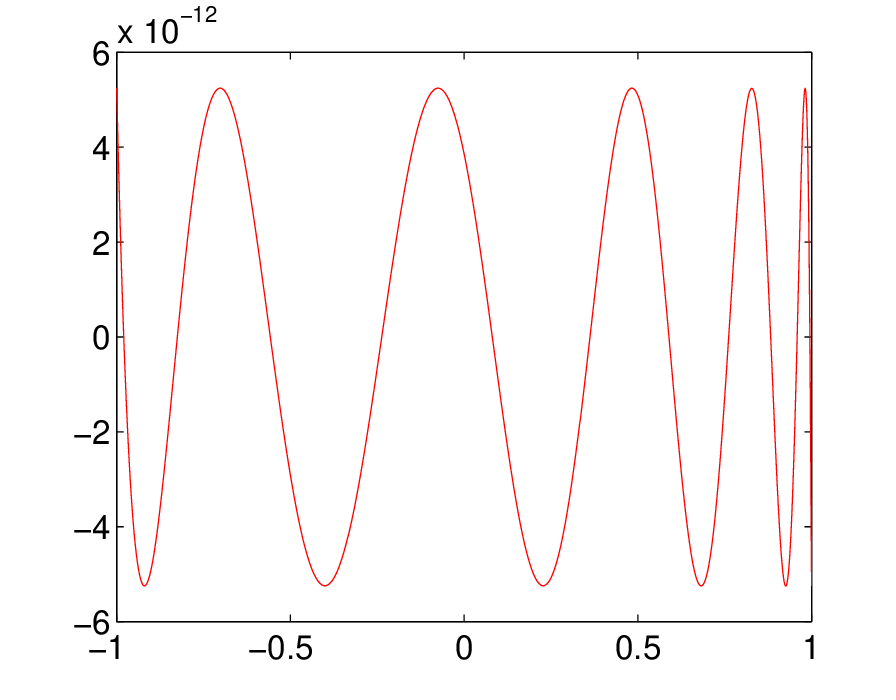}\hfill
   \caption{The  error in approximating  \(\bold{F}(x)\) by [5,5].}
 \label{fig:error10}
 \end{figure}
 \begin{figure}[htbp]
  \centering
  \includegraphics[scale=0.6,angle=0]{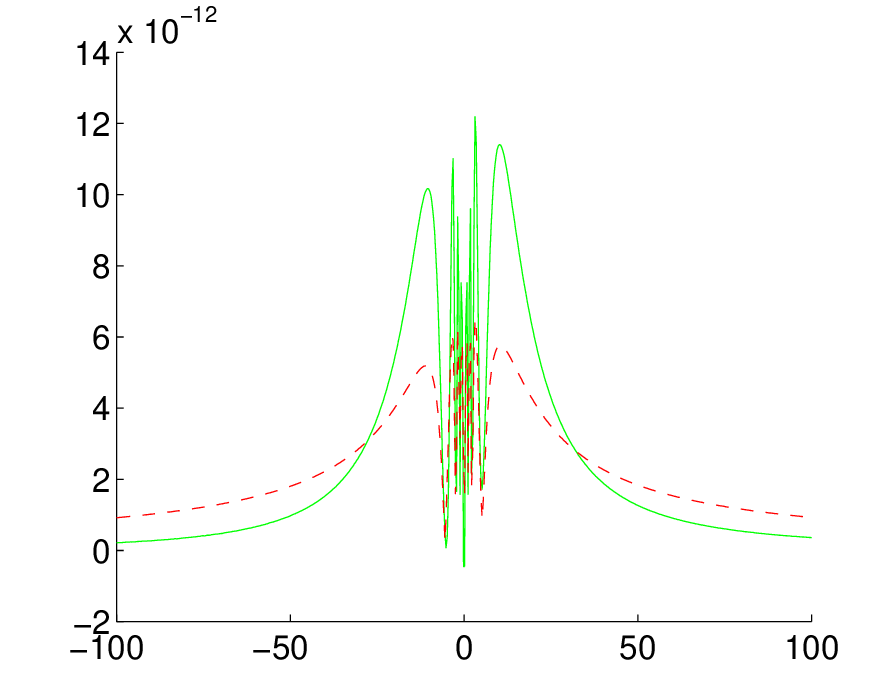}\hfill
   \caption{The  error in approximating  \({F}(y)\) by [10,10] (solid
      curve) and the error after it is split in factors, \(|F_{+}(y)-\tilde{F}_{+}(y)|\) (dotted curve). }
 \label{fig:10}
 \end{figure}
\begin{figure}[htbp]
  \centering
  \includegraphics[scale=0.6,angle=0]{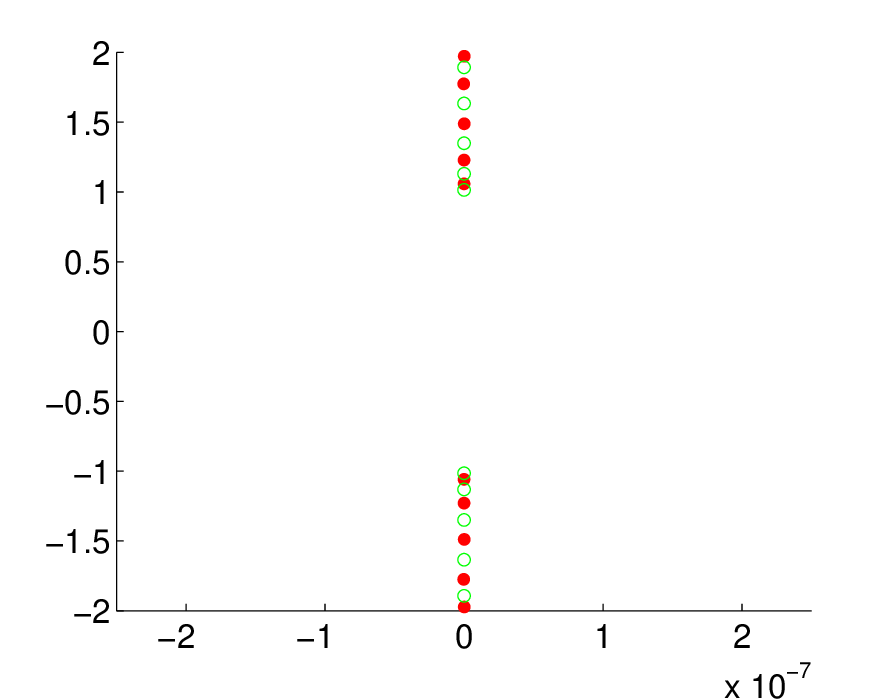}\hfill
   \caption{The  poles (filled in circles) and zeroes (hollow  circles) in approximating  \({F}(y)\) by
     [10,10] .}
 \label{fig:poles}
 \end{figure} 
At this point the approximated function can  be mapped back to the real line
and the \(L_\infty\) norm of the error will stay the same, see
Figure \ref{fig:10} (solid curves). 
 Note that it now 
becomes a [10,10] rational approximation. 
This example was chosen because the Wiener-Hopf  factors can be
easily seen by inspection. They are:
\[F_{\pm}(y)=\sqrt{ \frac{(y\pm i)}{(y\pm ik)}} \qquad
F_{+}(y)=F_-(-y).\] 
The factors of the rational approximation can be easily calculated and
compared to the exact solution above; this is plotted in Figure \ref{fig:10} (dotted curve).
This error is smaller than
the error obtained in \cite{Pade} even with \([30,30]\) for the same function.
 
What is more, the poles and zeroes also lie almost exactly on the branch cuts \([i,2i]\)
and \([-i,-2i]\). The close proximity of zeroes and poles mimic the
behaviour of a branch cut. This indicates that the behaviour in the whole
complex plane  is correct (see Figure~\ref{fig:poles}). Note that the
scale of the \(x\)-axis is \(10^{-7}\).

Bounds developed for real-valued kernels in Section \ref{sec:bound} could be applied to this
example. The \(L_2\) bounds will be used. Note \(c(2)=1\), \(M=1\) and
\(m=1/2\) this gives:

\[||F_{\pm}(y)-\tilde{F}_{\pm}(y)||_2 \le 2
||F(y)-\bar{F}(y)||_2.\]

The \(L_2\) norm is easy to estimate numerically in Chebfun by using
the \emph{norm} command. For the [10,10] rational approximation these calculations give:

\[||F(y)-\tilde{F}(y)||_2=9.5 \times 10^{-10}, \quad  \text{and} \quad
||F_{\pm}(y)-\tilde{F}_{\pm}(y)||_2 =6.5 \times 10^{-10},\] 

which agrees with the bounds above.

\subsection{Example 2}

The function is (Figure~\ref{fig:cf_ex2}):

\[F(y)=\frac{\sqrt{(y^2+1)}}{y} \tanh y.\]

\begin{figure}[htbp]
  \centering
  \includegraphics[scale=0.45,angle=0]{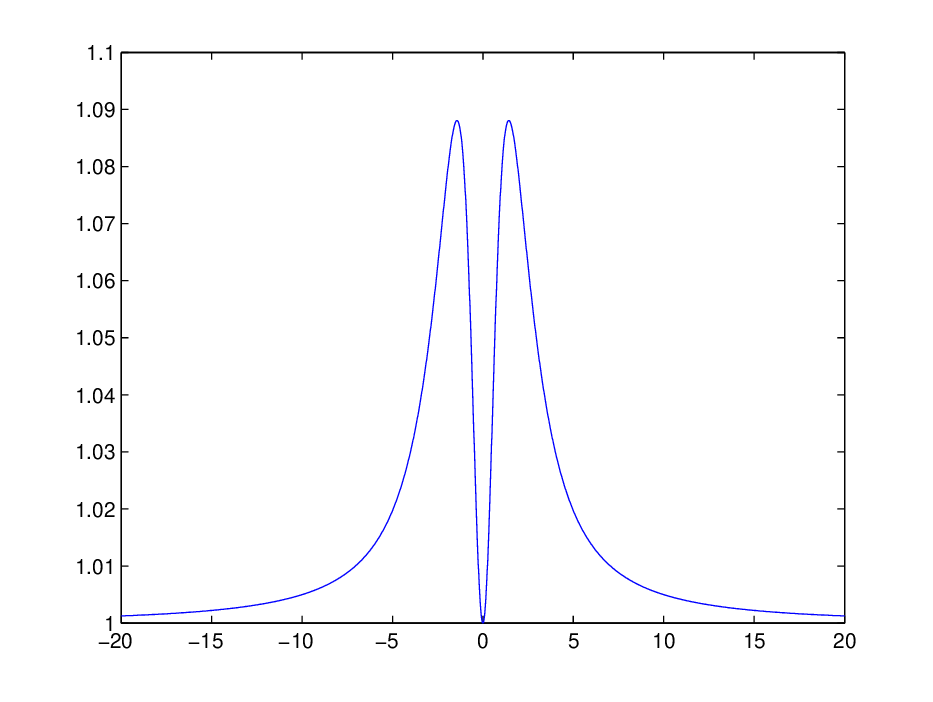}
 \includegraphics[scale=0.45,angle=0]{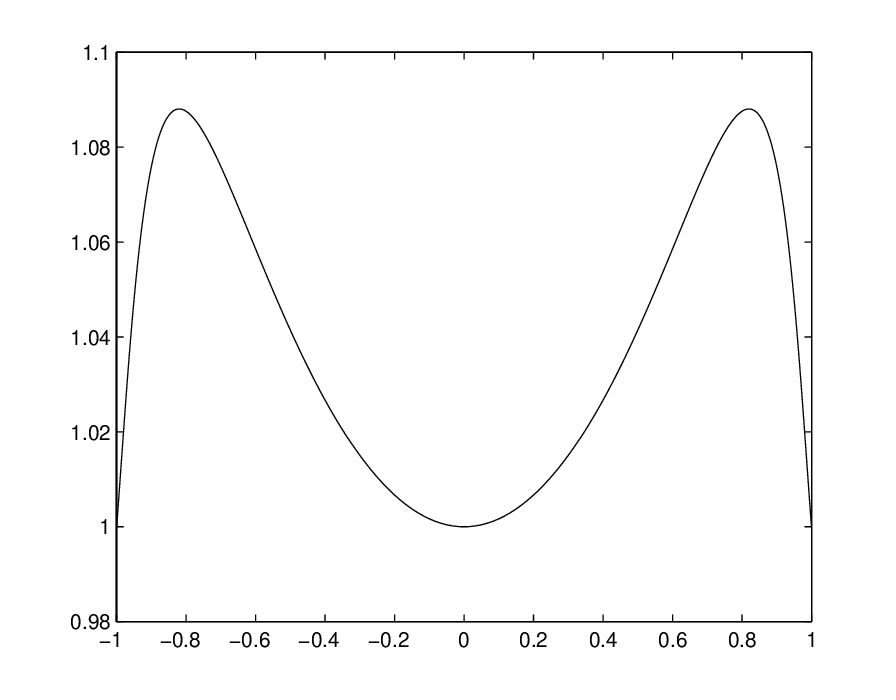}
   \caption{The initial function to be approximated
     \(F(y)=\frac{\sqrt(y^2+1)}{y} \tanh y\) (top) and the  mapped
     function to be approximated \(\bold{F}(x)=\frac{1}{x} \tanh
     \frac{x}{ \sqrt{1-x^2}}\) (bottom).}
 \label{fig:cf_ex2}
 \end{figure}
 This kernel is related to the one considered in Koiter's paper  of \(1954\) \cite{Koiter} and is
typical for electrostatic and slow flow problems. In the original article, rational approximations were considered by choosing the coefficients by hand, accuracy of \(10^{-2}\) was achieved. 
Once again the function becomes easier  to approximate (on the whole real line) once it is mapped to the interval \([-1, 1]\), Figure~\ref{fig:cf_ex2}. The mapped function is (the general mapping is used):
 \[\bold{F}(x)=\frac{1}{x} \tanh  \frac{x}{ \sqrt{1-x^2}}.\]
 This can then be approximated with errors of \(10^{-8}\) (giving the
 graph similar to Figure~\ref{fig:error10}) with
 \([12,12]\) even though
 the point at infinity is singular (it is an accumulation point of a
 sequence of poles). Interestingly, the position of zeroes and poles (see
 Figure \ref{fig:cf_ex2_roots})   is
 very similar to  that found in \cite{Pade}*{Fig 8} with a
 different method of approximating. 
 \begin{figure}[htbp]
  \centering
  \includegraphics[scale=0.6,angle=0]{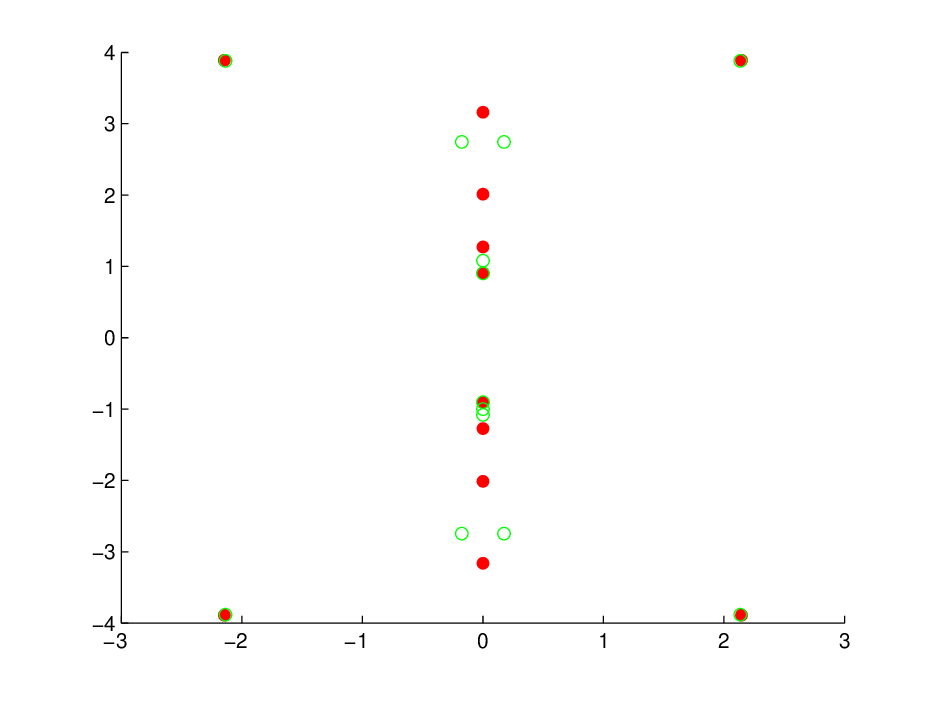}
   \caption{The  poles (filled in circles) and zeroes (hollow  circles) in approximating  \({F}(y)\) by
     [12,12].}
 \label{fig:cf_ex2_roots}
 \end{figure}
This kernel has exact factorisation which is given by:
\[K_{\pm} (y)= \frac{e^{-i \pi/4}\, \Gamma (1/2-iy/ \pi)}
{ \sqrt{\pi}\, \Gamma (1-iy/ \pi)} (i\pm\alpha)^{1/2} .\]
 But MATLAB does not have an inbuilt complex \( \Gamma\) function
 hence the results of this approximation cannot be compared to the
 exact factorisation. Even if there was an inbuilt complex \( \Gamma\)
 function to compare it to the approximate solution the inbuilt
 function would need to have very high accuracy. This demonstrate that if
 the numerical values of the solution is needed the approximate
 solution is as good as the exact solution.

\section{Conclusion}

This paper demonstrates with use of examples the method of approximate
solution of the Wiener-Hopf equation. The bounds presented allow to
predict the error in the approximation. The degrees of polynomials in
the rational approximation are small which allow to simplify the
initial problem significantly. The next project would be to generalise
the approach to the matrix Wiener-Hopf problem.

\section{Acknowledgements}

I am grateful to  Prof Nigel Peake for suggesting this project and for
support along the way. Anonymous referees made several useful comments
which helped to improve this paper.

{\small
\begin{bibdiv}
\begin{biblist}

\bib{Ab_ex}{article}{
      author={Abrahams, I.~D.},
       title={Radiation and scattering of waves on an elastic half-space; a
  non-commutative matrix {W}iener-{H}opf problem},
        date={1996},
        ISSN={0022-5096},
     journal={J. Mech. Phys. Solids},
      volume={44},
      number={12},
       pages={2125\ndash 2154},
         url={http://dx.doi.org/10.1016/S0022-5096(96)00064-6},
      review={\MR{1423542 (97i:73044)}},
}

\bib{diffucultie_1}{article}{
      author={Abrahams, I.~D.},
      author={Lawrie, J.~B.},
       title={On the factorization of a class of {W}iener-{H}opf kernels},
        date={1995},
        ISSN={0272-4960},
     journal={IMA J. Appl. Math.},
      volume={55},
      number={1},
       pages={35\ndash 47},
         url={http://dx.doi.org/10.1093/imamat/55.1.35},
      review={\MR{1349741}},
}

\bib{Pade}{article}{
      author={Abrahams, I.~David},
       title={The application of {P}ad\'e approximations to {W}iener-{H}opf
  factorization},
        date={2000},
        ISSN={0272-4960},
     journal={IMA J. Appl. Math.},
      volume={65},
      number={3},
       pages={257\ndash 281},
         url={http://dx.doi.org/10.1093/imamat/65.3.257},
      review={\MR{1806416 (2001k:45010)}},
}

\bib{AAK}{article}{
      author={Adamjan, V.~M.},
      author={Arov, D.~Z.},
      author={Kre{\u\i}n, M.~G.},
       title={Analytic properties of the {S}chmidt pairs of a {H}ankel operator
  and the generalized {S}chur-{T}akagi problem},
        date={1971},
     journal={Mat. Sb. (N.S.)},
      volume={86(128)},
       pages={34\ndash 75},
      review={\MR{0298453 (45 \#7505)}},
}

\bib{cont2}{article}{
      author={Anderson, B. D.~O.},
       title={Continuity of the spectral factorization operation},
        date={1985},
        ISSN={0101-8205},
     journal={Mat. Apl. Comput.},
      volume={4},
      number={2},
       pages={139\ndash 156},
      review={\MR{827391 (87f:93035)}},
}

\bib{Pade_book}{book}{
      author={Baker, George~A., Jr.},
      author={Graves-Morris, Peter},
       title={Pad\'e approximants},
     edition={Second},
      series={Encyclopedia of Mathematics and its Applications},
   publisher={Cambridge University Press},
     address={Cambridge},
        date={1996},
      volume={59},
        ISBN={0-521-45007-1},
         url={http://dx.doi.org/10.1017/CBO9780511530074},
      review={\MR{1383091 (97h:41001)}},
}

\bib{beardon2005algebra}{book}{
      author={Beardon, Alan~F.},
       title={Algebra and geometry},
   publisher={Cambridge University Press},
     address={Cambridge},
        date={2005},
        ISBN={0-521-89049-7},
         url={http://dx.doi.org/10.1017/CBO9780511800436},
      review={\MR{2153234 (2006a:00001)}},
}

\bib{BoydTB_n}{article}{
      author={Boyd, John~P.},
       title={Spectral methods using rational basis functions on an infinite
  interval},
        date={1987},
        ISSN={0021-9991},
     journal={J. Comput. Phys.},
      volume={69},
      number={1},
       pages={112\ndash 142},
         url={http://dx.doi.org/10.1016/0021-9991(87)90158-6},
      review={\MR{892255 (88e:65093)}},
}

\bib{Spectral}{book}{
      author={Boyd, John~P.},
       title={Chebyshev and {F}ourier spectral methods},
     edition={Second},
   publisher={Dover Publications Inc.},
     address={Mineola, NY},
        date={2001},
        ISBN={0-486-41183-4},
      review={\MR{1874071 (2002k:65160)}},
}

\bib{finance}{article}{
      author={Green, Ross},
      author={Fusai, Gianluca},
      author={Abrahams, I~David},
       title={The {W}iener-{H}opf technique and discretely monitored
  path-dependent option pricing},
        date={2010},
        ISSN={0960-1627},
     journal={Math. Finance},
      volume={20},
      number={2},
       pages={259\ndash 288},
         url={http://dx.doi.org/10.1111/j.1467-9965.2010.00397.x},
      review={\MR{2650248 (2011h:91201)}},
}

\bib{Gut_old}{article}{
      author={Gutknecht, Martin~H.},
       title={Rational {C}arath\'eodory-{F}ej\'er approximation on a disk, a
  circle, and an interval},
        date={1984},
        ISSN={0021-9045},
     journal={J. Approx. Theory},
      volume={41},
      number={3},
       pages={257\ndash 278},
         url={http://dx.doi.org/10.1016/0021-9045(84)90101-1},
      review={\MR{749342 (86c:30075)}},
}

\bib{Koiter}{article}{
      author={Koiter, W.~T.},
       title={Approximate solution of {W}iener-{H}opf type integral equations
  with applications. {II}b. {S}ome plate bending problems and an example from
  hydrodynamics},
        date={1954},
        ISSN={0023-3366},
     journal={Nederl. Akad. Wetensch. Proc. Ser. B.},
      volume={57},
       pages={575\ndash 579},
      review={\MR{0073856 (17,498g)}},
}

\bib{Kranzer_68}{article}{
      author={Kranzer, Herbert~C.},
       title={Asymptotic factorization in nondissipative {W}iener-{H}opf
  problems},
        date={1967},
     journal={J. Math. Mech.},
      volume={17},
       pages={577\ndash 600},
      review={\MR{0220020 (36 \#3087)}},
}

\bib{bounds}{article}{
      author={Kucerovsky, Dan},
      author={Najafabadi, Amir T.~Payandeh},
       title={An approximation for a subclass of the {R}iemann-{H}ilbert
  problems},
        date={2009},
        ISSN={0272-4960},
     journal={IMA J. Appl. Math.},
      volume={74},
      number={4},
       pages={533\ndash 547},
         url={http://dx.doi.org/10.1093/imamat/hxn034},
      review={\MR{2529336 (2011c:30095)}},
}

\bib{Mishuris09}{article}{
      author={Mishuris, Gennady~S.},
      author={Movchan, Alexander~B.},
      author={Slepyan, Leonid~I.},
       title={Dynamics of a bridged crack in a discrete lattice},
        date={2008},
        ISSN={0033-5614},
     journal={Quart. J. Mech. Appl. Math.},
      volume={61},
      number={2},
       pages={151\ndash 160},
         url={http://dx.doi.org/10.1093/qjmam/hbm030},
      review={\MR{2414430 (2009d:74085)}},
}

\bib{bookWH}{book}{
      author={Noble, B.},
       title={Methods based on the {W}iener-{H}opf technique for the solution
  of partial differential equations},
      series={International Series of Monographs on Pure and Applied
  Mathematics. Vol. 7},
   publisher={Pergamon Press},
     address={New York},
        date={1958},
      review={\MR{0102719 (21 \#1505)}},
}

\bib{Pandey}{book}{
      author={Pandey, J.~N.},
       title={The {H}ilbert transform of {S}chwartz distributions and
  applications},
      series={Pure and Applied Mathematics (New York)},
   publisher={John Wiley \& Sons Inc.},
     address={New York},
        date={1996},
        ISBN={0-471-03373-1},
        note={A Wiley-Interscience Publication},
      review={\MR{1363489 (97e:44010)}},
}

\bib{complex_fun}{book}{
      author={Remmert, Reinhold},
       title={Classical topics in complex function theory},
      series={Graduate Texts in Mathematics},
   publisher={Springer-Verlag},
     address={New York},
        date={1998},
      volume={172},
        ISBN={0-387-98221-3},
        note={Translated from the German by Leslie Kay},
      review={\MR{1483074 (98g:30002)}},
}

\bib{Fabe}{incollection}{
      author={Saff, E.~B.},
       title={Polynomial and rational approximation in the complex domain},
        date={1986},
   booktitle={Approximation theory ({N}ew {O}rleans, {L}a., 1986)},
      series={Proc. Sympos. Appl. Math.},
      volume={36},
   publisher={Amer. Math. Soc.},
     address={Providence, RI},
       pages={21\ndash 49},
      review={\MR{864364 (88h:30061)}},
}

\bib{Jouk}{book}{
      author={Surhone, L.M.},
      author={Timpledon, M.T.},
      author={Marseken, S.F.},
       title={{J}oukowsky transform},
   publisher={VDM Publishing},
        date={2010},
        ISBN={9786131126178},
         url={http://books.google.co.uk/books?id=05alcQAACAAJ},
}

\bib{Tref_old}{article}{
      author={Trefethen, Lloyd~N.},
       title={Rational {C}hebyshev approximation on the unit disk},
        date={1981},
        ISSN={0029-599X},
     journal={Numer. Math.},
      volume={37},
      number={2},
       pages={297\ndash 320},
      review={\MR{623046 (83c:30038)}},
}

\bib{Tref_old_real}{article}{
      author={Trefethen, Lloyd~N.},
      author={Gutknecht, Martin~H.},
       title={The {C}arath\'eodory-{F}ej\'er method for real rational
  approximation},
        date={1983},
        ISSN={0036-1429},
     journal={SIAM J. Numer. Anal.},
      volume={20},
      number={2},
       pages={420\ndash 436},
         url={http://dx.doi.org/10.1137/0720030},
      review={\MR{694530 (85g:41024)}},
}

\bib{Tref_new}{article}{
      author={Van~Deun, Joris},
      author={Trefethen, Lloyd~N.},
       title={A robust implementation of the {C}arath\'eodory-{F}ej\'er method
  for rational approximation},
        date={2011},
        ISSN={0006-3835},
     journal={BIT},
      volume={51},
      number={4},
       pages={1039\ndash 1050},
         url={http://dx.doi.org/10.1007/s10543-011-0331-7},
      review={\MR{2855439 (2012k:41013)}},
}

\bib{diffucultie_2}{article}{
      author={Zhang, Bo},
      author={Abrahams, I.~David},
       title={The radiation of sound from a finite ring-forced cylindrical
  elastic shell. {I}. {W}iener-{H}opf analysis},
        date={1995},
        ISSN={0962-8444},
     journal={Proc. Roy. Soc. London Ser. A},
      volume={450},
      number={1938},
       pages={89\ndash 108},
         url={http://dx.doi.org/10.1098/rspa.1995.0073},
      review={\MR{1349061 (96d:76095)}},
}

\end{biblist}
\end{bibdiv}
}
\end{document}